\numberwithin{equation}{section}
\newcommand{\bea}{\begin{eqnarray}}
\newcommand{\eea}{\end{eqnarray}}
\newcommand{\Bea}{\begin{eqnarray*}}
\newcommand{\Eea}{\end{eqnarray*}}
\theoremstyle{plain}
\newtheorem{Thm}{Theorem}[section]
\newtheorem{Lem}[Thm]{Lemma}
\newtheorem{Prop}[Thm]{Proposition}
\theoremstyle{remark}
\long\def\begcom#1\endcom{}
\newcommand{\Sep}{\operatorname{Sep}}
\newcommand{\wSep}{\widehat{{\rm Sep}}}
\newcommand{\rd}{{\rm{d}}}
\newcounter{RomanNumber}
\def \N{{\mathbb N}}
\def\DS{\displaystyle}
\def\ln{\operatorname{ln}}
\def\one{{\mathbf{1}}}
\def\bbD{{\mathbb{D}}}
\def\bbH{{\mathbb{H}}}
\def\NN{\mathbb{N}}
\def\brS{{\bar{S}}}
\def\brbrS{{\bar{\brS}}}
\def\brX{{\bar{X}}}
\def\brbrX{{\bar{\brX}}}
\def\breps{{\bar\varepsilon}}
\def\bS{{\mathbf{S}}}
\def\cE{{\mathcal E}}
\def\NN{{\mathbb N}}
\def\tW{{\widetilde W}}
\def\tS{{\tilde S}}
\def\bralpha{{\bar \alpha}}
\def\brn{{\bar n}}
\def\hn{{\hat n}}
\def\Var{{\rm Var}}
\def\brj{{\bar j}}
\def\hA{{\hat A}}
\def\hp{{\hat p}}
\def\fA{\Omega}
\def\fr{{\rho}}
\def\fv{\sigma}
\def\hB{{\hat{B}}}
\definecolor{bluegray}{rgb}{0.1, 0.1, 0.6}
\definecolor{dgreen}{rgb}{0.1,0.6,0.1}
\definecolor{bluegreen}{rgb}{0.1,0.5,0.2}
\definecolor{bpurple}{rgb}{0.74,0.2,0.64}
\definecolor{orange}{rgb}{0.8, 0.33, 0.0}
\def\EXP{{\mathbb{E}}}
\def\eps{{\varepsilon}}
\definecolor{deepcarrotorange}{rgb}{1, 0.31, 0.0}
\title[An analogue of LIL for heavy tailed random variables]{An analogue of Law of Iterated Logarithm for Heavy Tailed Random Variables.}
\author{Dmitry Dolgopyat}
\address[D.~Dolgopyat]{Department of Mathematics\\
University of Maryland, 4417 Mathematics Bldg, College Park\\
MD 20742, USA}
\email{dolgop@umd.edu}
\author{Sixu Liu}
\address[S.~Liu]{Beijing Institute of Mathematical Sciences and Applications\\
 Beijing 101408, China.
 }
\email{liusixu@bimsa.cn}
\begin{document}
\begin{abstract}
We establish functional limit theorems for ergodic sums of observables with power singularities for expanding circle maps. In the regime where the observables have infinite variance, we show that when rescaled by $N^{1/s} (\ln N)^\alpha$, the partial sum process has limit points consisting precisely of increasing piecewise constant functions with finitely many jumps. Our approach combines trimming techniques with a multiple Borel-Cantelli argument.
It provides a functional law of the iterated logarithm for heavy-tailed
processes where classical almost sure invariance principles do not apply.
\end{abstract}

\maketitle
\section{Introduction}
Let $\{X_k\}_{k=1}^\infty$ be a sequence of independent and identically distributed random variables with mean 0 and variance 1. Define the partial sum $S_n = \sum_{k=1}^n X_k$ and consider the function $W_N: [0,1] \to \mathbb{R}$ given by $$W_N\left(\frac{n}{N}\right)=\frac{S_n}{\sqrt{N}}$$ at rational points $n/N,$ extended by linear interpolation elsewhere. The functional central limit theorem states that  $W_N(\cdot)$  converges in distribution  to the standard Wiener measure as $N\to\infty$. Since Wiener measure has full support on $C([0,1])$, it follows that almost surely, for any continuous $h \in C([0,1])$, there exists a subsequence $N_j$ such that $W_{N_j}$ converges uniformly to $h$ (see e.g. \cite{R76}).

It is natural to ask whether one can obtain a smaller set of limits by considering a larger normalization.
Strassen \cite{Str64} answered this question by defining
$$\label{FLIL}
 \tW_N\left(\frac{n}{N}\right)=\frac{S_n}{\sqrt{2N\ln \ln N}},
$$
and proving that almost surely, the limit points of $\{\widetilde{W}_N\}$ are precisely the absolutely continuous functions $h$ with $h(0)=0$ and $\int_0^1 |h'(x)|^2\rd x\leq 1.$  This result is referred to as the functional form of law of the iterated logarithm~(FLIL) in the literature~\cite{HH80,PS75}. FLIL  generalizes the classical LIL for random variables~\cite{HW41,Khintchine24,Kolmogoroff24}.

Strassen's proof relies on the almost sure invariance principle (ASIP), which establishes that, under suitable conditions, a martingale is almost surely close to a Brownian motion. Consequently, FLIL extends to martingales satisfying appropriate moment conditions \cite{HH80,Str67}. The ASIP framework further extends to dynamical systems via martingale approximations for weakly dependent processes \cite{PS75}, establishing FLIL for systems including uniformly hyperbolic systems~\cite{DP84}, Lorenz attractors~\cite{HM07}, non-uniformly hyperbolic systems~\cite{Haydn15,Korepanov18, MN05, MN09}, partially hyperbolic systems~\cite{Dolgopyat04}, dispersing billiards~\cite{BM08,Chernov06} and compact group extensions~\cite{FMT03}.

However, there are still many examples of dynamical systems where the limit distributions of ergodic sums are non-Gaussian. For these systems, the aforementioned approach is not applicable, and ASIP does not hold. It is therefore of interest to develop a new method for determining a suitable normalizing sequence $L_N$ such that the function
$$\tS_N\left(\frac{n}{N}\right)=\frac{S_{n}}{L_N}$$
has a non-trivial set of limit points.

In this paper, we address the above problem in a simple setting. Let
$$\DS S_n(y)=\sum_{k=1}^n \phi(f^k y),$$ where $f$ is an expanding circle map  $\mathbb{S},$ and
$\phi$ has a power singularity of the form $|y-x|^{-1/s}$ for a fixed point $x\in\mathbb{S}.$ Assuming that $0<s<2$ and $x$ is a Lebesgue typical point, we show that a natural normalizing sequence is
$$L_N=N^{1/s} (\ln N)^\alpha.$$ If $\alpha$ satisfies
$${[(r+1)s]}^{-1}<\alpha\leq{(rs)}^{-1},$$ then the set of limit points consists of piecewise constant increasing  functions starting at $0$ with at most $r$ jumps.

Before formulating our result precisely, let us mention two related topics.

The first topic is laws of large numbers for trimmed sums (see \cite{AN, KS19a, KS20, KM95, Mo76}). Trimming is an important tool in our analysis, as it allows us to discard all summands in $S_N$ except for the $r$ largest ones. However, this is only part of our analysis, as the contribution of the retained terms requires more detailed analysis than in the papers mentioned above. Additionally, unlike the studies cited above, our results apply not only to sums with infinite mean but also to sums with infinite variance.

The second topic involves the application of the classical Borel-Cantelli lemma to estimate the rate of convergence
in laws of large numbers (see \cite{CN, Ch66, DGM, Kes97} and references wherein). This topic is closer to
the subject of our paper. However, while the papers mentioned above deal with a single  $N$ at a time, we obtain
the functional version of the deviation result. Fundamental to our approach is the multiple Borel-Cantelli lemma proposed recently in \cite{DBS21}, which provides a criterion for determining when multiple rare events occur at the same time scale.

The paper is structured as follows. Section \ref{ScResults} states the main theorem and outlines its proof via the multiple Borel-Cantelli lemma. Section \ref{ScHeavy} proves a key lemma controlling trimmed sums using reflection principles. Standard technical results are deferred to appendices.

\section{An analogue of LIL for Smooth expanding maps}
\label{ScResults}

\subsection{The main result.}
Let $f$ be a $C^2$ expanding map of the circle $\mathbb{S}.$ Here $f$ is called expanding if there exists $\gamma>1$ such that $|f'(x)|>\gamma$ for all $x\in\mathbb{S}.$   It is known that $f$ admits a unique invariant measure $\mu$ that is absolutely continuous with respect to the Lebesgue measure~\cite{Viana97}. Fix $x\in\mathbb{S},$ and consider an unbounded observable $\phi$ of the form
$$\phi(y)=\frac{a}{|y-x|^{1/s}}+\psi(y),$$ where $\psi$ is a $\psi \in C^1(\mathbb{S}),$ $a>0,$ and $s>0.$
Define the sequence
$\DS X_k(y) =\phi(f^k y).$ Then
\begin{equation}
\label{Tail}
\mu\left(\left\{y:X_k(y)>t\right\}\right)\sim a_st^{-s}
\end{equation}
for sufficiently large $t,$ where $a_s=\rho_0 a^{s}$ and $\rho_0$ denotes the density of $\mu$ at $x.$
Now for a positive parameter $\alpha,$ define the process
\bea\label{WN} W_N(t)=\frac{\left(\sum_{k\leq Nt} X_k\right)-a_N}{N^{1/s} (\ln N)^\alpha},
\eea
with centering term $a_N=0$ if $s\leq1,$ and $a_N=Nt\int \phi(y) \rd\mu(y)$ if $s>1.$

We regard $W_N(\cdot)$ as a sequence of random elements in the space $\bbD[0,1]$ of c\`{a}dl\`{a}g functions from $[0,1]$ to
$\overline{\mathbb{R}}=[-\infty, +\infty],$ equipped with the metric $$d(z_1, z_2)=\left|\tan^{-1}(z_1)-\tan^{-1}(z_2)\right|.$$
While several topologies exist on $\bbD[0,1]$, the Skorokhod $J_1$ topology is most relevant for our analysis.
Let $\Lambda$ denote the set of strictly increasing continuous functions $\lambda:[0,1] \to [0,1]$ with $\lambda(0) = 0$ and $\lambda(1) = 1$.  The $J_1$ topology is defined \cite{Kern23} using the distance
$$d_{J_1}(h_1, h_2)=\inf_{\lambda\in\Lambda} \left\{\|h_1\circ\lambda-h_2\|_\infty+\|\lambda-id\|_\infty\right\},$$
where $\|\cdot\|_\infty$ denotes the uniform norm.

Key properties of the $J_1$ topology that are employed in our analysis include:
\begin{enumerate}[leftmargin=*]
    \item[(i)] Convergence in uniform norm implies convergence in $J_1$ topology.
    \item[(ii)] Let $\bbH_r$ be the space of step functions with at most $r$ positive jumps, then $W\in\bbH_r$  can be written as
    $$ W(t)=\sum_{m=1}^{\bar r} c_m \one_{t\leq t_m}(t), $$
    where $\bar{r} \leq r$, the coefficients $\{c_m\}_{m=1}^{\bar{r}}$ represent positive amplitudes, and the points $\{t_m\}_{m=1}^{\bar{r}}$ are strictly increasing jump times. Two elements $W_1, W_2 \in \bbH_r$ with exactly $r$ jumps are $J_1$-close if their respective amplitudes and jump times are close.
    \item[(iii)] The $J_1$ topology allows us to ignore small jumps in the limit.
\end{enumerate}

To present our results, we introduce the notion of Diophantine points.
A point $x$ is \textbf{Diophantine} if there exist constants $\epsilon=\epsilon(x)>0$ and $\rho_0=\rho_0(x)>0$ such that for all  $\rho\leq \rho_0$ and all positive integers $k\leq \epsilon|\ln \rho|,$
$$f^{-k} B(x, \rho) \cap B(x, \rho)
= \emptyset.$$
This notion of Diophantine points characterizes a specific type of slow recurrence behavior. Rooted in Diophantine approximation, which restricts how well real numbers can be approximated by rational ones, this property effectively prevents dense orbit clustering in dynamical systems. Unlike periodic points, Diophantine points are quantitatively
non-periodic. Furthermore, as stated in Lemma~5 of \cite{BarreiraSaussol01}, $\mu$-almost every $x$ is Diophantine.

With this definition established, we now proceed to formally state our main theorem.

\begin{Thm}
  \label{ThGMLIL}
Under the above hypotheses, assume that either

  (i) $0<s<1,$ or

  (ii) $1\leq s<2$  and $x$ is Diophantine.

  Let $\alpha$ satisfy
  $$\frac{1}{(r+1) s}<\alpha\leq \frac{1}{rs}$$ for some $r\in \NN.$
Then for almost every $y,$ the following holds: the set of
$J_1$ limit points of $\{W_N(t)\}_{N=1}^\infty$
is equal to $\bbH_r.$
\end{Thm}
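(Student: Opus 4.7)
The plan is to prove the two inclusions separately: every $J_1$-limit point of $\{W_N\}$ belongs to $\bbH_r$ (upper bound), and every element of $\bbH_r$ arises as a $J_1$-limit point (lower bound). Both halves rest on the same heuristic. By \eqref{Tail}, the probability that a single summand reaches size of order $T_N := N^{1/s}(\ln N)^\alpha$ is $\sim a_s N^{-1}(\ln N)^{-\alpha s}$, so the expected number of such ``large'' summands in $\{1,\dots,N\}$ is $\sim (\ln N)^{-\alpha s}$, a quantity that decays only logarithmically. Since the hypothesis $\tfrac{1}{(r+1)s}<\alpha\leq\tfrac{1}{rs}$ translates to $\tfrac{1}{r+1}<\alpha s\leq \tfrac{1}{r}$, one expects at most $r$ large summands to survive along typical subsequences, while along carefully chosen subsequences exactly $\bar r\leq r$ of them can be arranged to land at prescribed times with prescribed heights. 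Everything else contributes negligibly thanks to the trimming lemma announced in Section~\ref{ScHeavy}.

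For the upper bound, fix $\eps>0$ and let $\cN_\eps(N)$ denote the number of indices $n\leq N$ with $X_n>\eps T_N$. Restricting to the dyadic sequence $N=2^j$, a union bound combined with \eqref{Max-Dist} gives $\mu\bigl(\cN_\eps(2^j)\geq r+1\bigr)\leq C\, j^{-\alpha s(r+1)}$; since $\alpha s(r+1)>1$, the classical Borel-Cantelli Lemma yields $\cN_\eps(2^j)\leq r$ for all large $j$ almost surely. Comparing $T_N$ at $N$ and at the nearest dyadic scale (they differ by a bounded factor) upgrades this to $\cN_{\eps'}(N)\leq r$ for all large $N$ at the cost of slightly shrinking $\eps$. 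The lemma of Section~\ref{ScHeavy} bounds the contribution to $W_N$ coming from summands below $\eps T_N$ by a quantity tending to $0$ with $\eps$ uniformly in $N$ (modulo the appropriate centering when $s\geq 1$). Combining these forces every $J_1$-limit of $W_N$ to consist of at most $r$ jumps of any fixed positive height, hence to belong to $\bbH_r$.

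For the lower bound, fix a countable dense subset $D$ of $\bbH_r$. For each $h=\sum_{m=1}^{\bar r} c_m\one_{[t_m,1]}\in D$ with $\bar r\leq r$, we must produce almost surely a subsequence $\{N_j\}$ with $W_{N_j}\to h$ in $J_1$. Equivalently, we need $\bar r$ indices $n_m^{(j)}=t_mN_j+o(N_j)$ such that $|f^{n_m^{(j)}}y-x|$ lies in a thin annulus of radius $\approx (a/(c_m T_{N_j}))^s$, while no other $n\in\{1,\dots,N_j\}$ witnesses $|f^n y-x|<\delta (a/T_{N_j})^s$ for a suitably small $\delta>0$. By \eqref{Max-Dist} each of the $\bar r$ prescribed recurrence events has probability of order $T_{N_j}^{-s}=N_j^{-1}(\ln N_j)^{-\alpha s}$; summed over the admissible tuples compatible with $(t_m)$, the expected count at scale $N_j$ is of order $(\ln N_j)^{-\alpha s\bar r}$. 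Since $\alpha s\bar r\leq 1$ the sum over $N_j$ diverges but is too slow for classical Borel-Cantelli, placing us precisely in the regime addressed by the multiple Borel-Cantelli Lemma of \cite{DBS21}. Once its correlation hypotheses are verified, the lemma produces the required $\bar r$-fold coincidence infinitely often almost surely, and a diagonal argument over $h\in D$ then shows that almost surely the limit set contains all of $\bbH_r$.

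The main obstacle is verifying the correlation hypotheses of the multiple Borel-Cantelli Lemma in the present dynamical setting. Decorrelation between returns at well-separated times follows from exponential decay of correlations for $C^2$ expanding maps, but short-range interactions---for example, a near-return to $x$ forcing a second near-return within a logarithmic number of iterates---are precisely what the Diophantine hypothesis $f^{-k}B(x,\rho)\cap B(x,\rho)=\emptyset$ for $k\leq\eps|\ln\rho|$ is designed to rule out (with the weaker non-periodicity assumption sufficing when $s<1$ because the tails are heavy enough to dominate the interaction). A secondary technical point is ensuring that no unwanted large summands appear inside $\{1,\dots,N_j\}$: this is achieved by combining the upper-bound argument of the second paragraph with a local analysis around each of the $\bar r$ prescribed indices, guaranteeing that the approximate construction extends to a genuine $J_1$-neighborhood of $h$.
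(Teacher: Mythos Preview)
Your outline follows the paper's two–step strategy (trimming plus multiple Borel--Cantelli), but two claims in the upper-bound paragraph do not hold as written, and filling them forces you back onto the paper's argument.

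First, the bound $\mu\bigl(\cN_\eps(2^j)\geq r+1\bigr)\leq C\, j^{-\alpha s(r+1)}$ does not follow from a union bound together with \eqref{Max-Dist}. The union bound produces a sum of \emph{joint} probabilities $\mu\bigl(\bigcap_i \{X_{n_i}>\eps T_N\}\bigr)$, whereas \eqref{Max-Dist} controls only the marginals; one needs exactly the mixing inputs $(M1)_{r+1}$, $(M2)_{r+1}$ or $(M4)_*$ that you postpone to the last paragraph. More seriously, in case~(i) only $(M4)_*$ is available (Proposition~\ref{AdTag}(b) requires the Diophantine hypothesis), and $(M4)_*$ via Lemma~\ref{LmMultiBC}(b) controls only well-\emph{separated} $(r{+}1)$-tuples. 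It does not rule out several indices $n$ with $X_n>\eps T_N$ clustered within $K\ln N$ iterates, so the pointwise bound $\cN_\eps(N)\leq r$ need not hold. The paper accordingly counts separated clusters rather than individual large terms, and then uses Lemma~\ref{LmTwoHumps} to show that a cluster whose maximal term stays below $N^{1/s}(\ln N)^{\bralpha}$ cannot contribute $\eps T_N$ to the sum.

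Second, Lemma~\ref{lem5} does not trim at level $\eps T_N$. It trims at the strictly lower level $N^{1/s}(\ln N)^{\delta}$ for a fixed $\delta<\alpha$; the moment estimate \eqref{HT-IntMom} carries a factor $(\ln N)^{\delta m}$ and becomes useless at $\delta=\alpha$, so the Markov/Borel--Cantelli step does not close. The intermediate range $[N^{1/s}(\ln N)^{\delta},\, N^{1/s}(\ln N)^{\bralpha}]$ is again disposed of by the cluster argument and Lemma~\ref{LmTwoHumps}, not by the trimming lemma. Once these two gaps are filled your upper bound coincides with the paper's. Your lower bound is essentially the paper's (it is cleaner to take the dense set $D\subset\bbH_r$ to consist of functions with exactly $r$ jumps, so that the already-established upper bound on the number of large clusters automatically excludes stray jumps along the constructed subsequence; this avoids the extra ``no unwanted large summand'' step you describe for $\bar r<r$).
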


\subsection{Multiple Borel-Cantelli lemma.} Given sequences of positive numbers $\{\rho_N\}$ and $\{c_j\},$ and some $\varepsilon<c_j.$ The proof of Theorem~\ref{ThGMLIL}
requires controlling the number of times $k\leq N$ such that the following events occur:
$$\left\{y:X_k(y)> \rho_N\right\}$$
and
$$\left\{y:\,c_j-\varepsilon\leq X_k(y)\leq c_j+\varepsilon\right\}.$$
Furthermore, if multiple such events occur for some $k,$ controlling their timing is essential. To address this,we employ an extension of the multiple Borel–Cantelli lemma, generalizing Theorem 2.12 in \cite{DBS21}.

Throughout,  we denotes $C$ a constant depending on the fixed number $r$ of the occurring events, which may vary across lines but remains independent of the time scale $N,$ the events $\fA_{\rho_N},$ and the iteration order of $f,$ etc.

Consider a probability space $(\Omega,\mathcal{F},\mu)$ and  a family of events $\{\fA_{\rho_N} \}_{N=1}^\infty.$
Suppose there exists a constant $\hat\varepsilon>0$ and a decomposition of $\fA_{\rho_N}$ such that
$$\fA_{\rho_N}=\bigcup_{i=1}^p \fA_{\rho_N,i}\quad\text{and}\quad
\mu(\fA_{\rho_N,i})\geq \hat\varepsilon\mu(\fA_{\rho_N}).$$
We introduce the following notation:
\begin{equation}\label{notation}
\fA^k_{\rho_N}=f^{-k}(\fA_{\rho_N}),\,\,\,\fA^k_{\rho_N,i}=f^{-k}(\fA_{\rho_N,i}),\,\,\,\sigma_{\rho_N}=\mu(\fA_{\rho_N})\,\,\, {\text{and}}\,\,\, \sigma_{\rho_N,i}=\mu(\fA_{\rho_N,i}).
\end{equation}
For a fixed $r\in\mathbb{N^*},$ and  each tuple $(i_1,\dots, i_r)\in \{1,\dots, p\}^r,$  we characterize whether, with probability 1,  the events $\left\{\fA_{\rho_N,i_j}^{k_j}\right\}$ occur for all $1\leq j\leq r$ for infinitely many $N.$

For an  $r$-tuple $0< k_1<\dots<k_r \leq N,$ we take the separation indices
\begin{align*}\Sep_N(k_1,\dots, k_r)=\sharp\left\{j:0\leq j\leq r-1,\, k_{j+1}-k_j\geq  s(N)\right\}, \,\,\,  k_0:=0,\\
  \wSep_N(k_1,\dots, k_r)=\sharp\left\{j:0\leq j\leq r-1,\, k_{j+1}-k_j\geq \hat s(N)\right\}, \,\,\, k_0:=0,\end{align*}
where $s: \mathbb{N} \to \mathbb{N}$ satisfies $$s(N) \leq C(\ln N)^2,$$ and  $\hat{s}:\N \rightarrow\N$ satisfies $$\varepsilon N\leq \hat s(N) <qN/{(2r)}$$ for some constants $0<q<1$ and  $0<\varepsilon<q/{(2r)}.$

Assume there exists a sequence $\eps_N \to 0$ satisfying the following conditions, which quantify mixing and non-clustering for  $\left\{\fA^{k}_{\fr_N}\right\}$ and  $\left\{\fA^{k}_{\fr_N, i}\right\}$ under appropriate separations.

\begin{itemize}
\item[$(M1)_r$] For $0<k_1<k_2<\dots<k_r\leq N$ with $\Sep_N(k_1, \dots, k_r)=r$ and all
$(i_1,\dots, i_r)\in \{1,\dots, p\}^r,$
$$\left(\prod_{j=1}^r \sigma_{\rho_N, i_j}\right)\left(1-\eps_N\right)
\leq\mu\left(\bigcap_{j=1}^r \fA^{k_j}_{\fr_N, i_j} \right)\leq \left(\prod_{j=1}^r \sigma_{\rho_N, i_j}\right)\left(1+\eps_N\right).$$

\item[$(M2)_r$] For $0< k_1<k_2<\cdots <k_r\leq N$ with $\Sep_N(k_1, \dots, k_r)=m<r$,
$$\mu\left(\bigcap_{j=1}^r \fA^{k_j}_{\fr_N} \right)
\leq {\frac{C\fv_{\fr_N}^m}{(\ln N)^{100r}}}.$$

\item[$(M3)_r$] For $0< k_1<\dots<k_r<l_1<\dots<l_r$ where $2^i < k_\alpha \leq 2^{i+1},$ $2^j < l_\beta \leq 2^{j+1}$ ($1 \leq \alpha, \beta \leq r$), $j - i \geq b$ ($b \geq 2$),
$$\wSep_{2^{i+1}}(k_1,\ldots,k_r)=r, \quad \wSep_{2^{j+1}}(l_1,\ldots,l_r)=r,
 \quad l_1-k_r\geq \hat s(2^{j+1})$$
and for each $(i_1, i_2, \dots, i_r),\,(j_1, j_2,\dots, j_r)\in \{1,\dots, p\}^r,$
$$
\mu\left(\left(\bigcap _{\alpha=1}^r \fA^{k_\alpha}_{\fr_{2^i},i_\alpha}\right)\bigcap
\left( \bigcap_{\beta=1}^r \fA^{l_\beta}_{\fr_{2^j},j_\beta} \right)\right)\leq \left(\prod_{\alpha=1}^r \sigma_{\rho_{2^i}, i_\alpha}\right)\left(\prod_{\beta=1}^r \sigma_{\rho_{2^j}, j_\beta}\right)\left(1+\eps_N\right).$$
\end{itemize}

Define the events
$$A_{*}^N=\left\{\omega\in\Omega:\,\exists\, 0<k_1<k_2<\dots <k_r\leq N\,\,\text{such that}\,\,\Sep_N(k_1, \dots, k_r)=r\,\,\text{and}\,\,\omega\in\fA^{k_j}_{\fr_N}\right\},$$
$$A_{I_1,\dots, I_r; i_1, \dots i_r}^N=\left\{\omega\in\Omega:\,\exists\, k_j\,\,\text{such that}\,\,k_j/N\in  I_j\,\,\text{and }\,\,\omega\in\fA^{k_j}_{\fr_N, i_j} \right\},$$
and
$$H_{I_1,\dots, I_r; i_1, \dots, i_r}=\bigcap_{M=1}^\infty\bigcup_{N=M}^\infty A_{I_1,\dots, I_r; i_1, \dots i_r}^N.$$

Additionally, assume
$$ (M4)_{*} \quad\quad
\limsup_{N\to\infty}
\frac{\mu\left(A_{*}^N\right)}{N^r\sigma_{\rho_N}^r}\leq C
$$
and
$$ (M4)_{I_1,\dots, I_r; i_1, \dots, i_r} \quad\quad
\lim_{N\to\infty}
\frac{\mu\left(A_{I_1, \dots, I_r; i_1, \dots, i_r}^N\right)}{N^r\Pi_{j=1}^r\left(\sigma_{\rho_N,i_j}|I_j|\right)}=1.
$$

\begin{Lem}
\label{LmMultiBC}
Given a family of events $\{\fA_{\rho_N} \}_{N=1}^\infty$ with decompositions  $ \DS \fA_{\rho_N}=\bigcup_{i=1}^p \fA_{\rho_N,i}$, define
$$\bS_r=\sum_{j=1}^\infty \left(2^j {\fv_{\fr_{2^j}}}\right)^r.$$
Assume  $\rho_{N_1}\leq\rho_{N_2}$ and $\fA_{\fr_{N_1}}\subset\fA_{\fr_{N_2}}$ for $N_1\geq N_2.$

\noindent \textbf{Case 1:} $\bS_r<\infty.$
\begin{itemize}
    \item[(a)] If  $(M1)_r$ and $(M2)_r$ hold, then with probability $1,$ the events $\{\fA_{\rho_N}^k\}_{k=1}^N$ occur at most $(r-1)$ times for all sufficiently large $N.$
    \item[(b)] If $(M4)_*$ holds, then with probability 1,  the event $A^N_*$ does not occur for all sufficiently large $N.$
\end{itemize}

\noindent \textbf{Case 2:} $\bS_r=\infty.$ \\
For any $(i_1,\dots, i_r)\in \{1,\dots, p\}^r$ and any intervals $I_1, I_2 \dots I_r\subset [0,1],$
\begin{itemize}
    \item[(c)] If $(M1)_k$ and $(M2)_k$ hold for $k=r,r+1,\dots, 2r,$ and $(M3)_r$ is satisfied under the condition
    $\DS \hat s(N)/N <\min_{0\leq i<j\leq r} d(I_i, I_j)$ where $I_0=0,$ then
    $\DS \mu\left(H_{I_1,\dots, I_r; i_1, \dots, i_r}\right)=1.$
    \item[(d)] If $(M3)_r$ is satisfied under the condition $\DS \hat s(N)/N < \min_{0\leq i<j\leq r} d(I_i, I_j)$ where $I_0 = 0,$ and $(M4)_{I_1, \dots, I_r; i_1, \dots, i_r}$ holds, then
    $\DS \mu\left(H_{I_1,\dots, I_r; i_1, \dots, i_r}\right) = 1.$
\end{itemize}
\end{Lem}

\begin{proof}[Proof of Lemma~\ref{LmMultiBC}]
Part (a) is Theorem 2.6 in \cite{DBS21}.  We now consider the case where $\bS_r<\infty$ and $(M4)_{*}$ holds.
Suppose  $\{\delta_N\}$ is a sequence of positive numbers such that $\rho_N=\delta_{2N}.$ Define the event
$$A_{*,2}^N=\left\{\omega\in\Omega:\,\exists\, 0<k_1<k_2<\dots <k_r\leq 2N,\,\Sep_{2N}(k_1, \dots k_r)=r\,\,\text{such that}\,\,\omega\in\fA^{k_j}_{\fr_N}\right\}.
$$
By $(M4)_{*},$
$\DS \limsup_{N\to\infty}
\frac{\mu\left(A_{*,2}^N\right)}{(2N)^r\sigma_{\delta_{2N}}^r}\leq C.
$
Since $\sigma_{\rho_N}\!\!=\!\!\mu\left(\Omega_{\rho_N}\right)\!\!=\!\!\mu\left(\Omega_{\delta_{2N}}\right)
\!\!=\!\!\sigma_{\delta_{2N}},$ we have
$$\limsup_{N\to\infty}
\frac{\mu\left(A_{*,2}^N\right)}{N^r\sigma_{\rho_{N}}^r}\leq C.
$$
By taking $N=2^M,$ we find that $\mu\left( A_{*,2}^{2^M}\right)\leq C2^{rM} {\fv_{\fr_{2^M}}^r}$ and thus $$\sum_{M=1}^\infty\mu\left( A_{*,2}^{2^M}\right)\leq C\bS_r<\infty.$$ By the classical Borel--Cantelli lemma, $A_{*,2}^{2^M}$ does not occurs for infinitely many $M$'s with probability 1. Then Part (b) follows from the fact that
$A_{*}^N\subset A_{*,2}^{2^M}$ when  $2^M\leq N\leq 2^{M+1}.$

Part (c) is Theorem 2.12 in  \cite{DBS21}. Finally, part (d) holds because in the proof of part (c) in \cite{DBS21},
$(M1)_k$ and $(M2)_k$ for $k=r,r+1,\dots, 2r$  are only used to verify $(M4)_{I_1,\dots, I_r; i_1, \dots, i_r}$.
\end{proof}

We now address the central problem of this paper by analyzing the following targets:
\bea\label{fAro}\fA_{\rho_N}=\left\{y:\phi(y)>C\rho_N\right\}\eea
with $\rho_N=N^{1/s} (\ln N)^t$ where $t>0$ and $C>0;$
\bea\label{fAroj}\fA_{\rho_N,j}=\left\{y:\frac{\phi(y)}{\rho_N} \in [c_j-\eps, c_j+\eps]\right\}\eea
with $c_j>0$, $0<\eps<c_j$ for non-periodic points, and
$0<\eps<\left(\gamma^{q/s}-1\right)c_j/\left(\gamma^{q/s}+1\right)$ for periodic points of period $q$. Also let
\bea\label{tlfAroj}\tilde\fA_{\rho_N,j}=\fA_{\rho_N,j}^c\mathrel{\scalebox{1}{$\bigcap$}} f^{-1}\fA_{\rho_N,j}^c\mathrel{\scalebox{1}{$\bigcap$}}\cdots \mathrel{\scalebox{1}{$\bigcap$}} f^{-(p_0-1)}\fA_{\rho_N,j}^c\mathrel{\scalebox{1}{$\bigcap$}}f^{-p_0}\fA_{\rho_N,j}\eea
where $p_0=\ln\ln N.$

Additionally, we set $s(N) = K \ln N$, where $K$ is chosen to be sufficiently large, and set $\hat s(N) = 2 \varepsilon N$.

\begin{Prop}\label{AdTag}
\begin{itemize}
\item[(a)] The events $\left\{\fA_{\rho_N}\right\}$ and $\left\{\fA_{\rho_N,j}\right\}$ satisfy conditions $(M1)_r$ and $(M3)_r$ for all $r\in\mathbb{N}.$
\item[(b)]  If $x$ is Diophantine then condition $(M2)_r$ holds for any $r\in\mathbb{N}.$
\item[(c)]  Condition\ $(M4)_*$ is satisfied for $\left\{\fA_{\rho_N}\right\}.$
\item[(d)] Suppose that $\min_{i\neq j}d(I_i,I_j)>\eps$ for some $\eps>0.$ If $x$ is not a periodic point, condition $(M4)_{I_1, \dots, I_r; i_1, \dots, i_r}$ holds for the events $\left\{\fA_{\rho_N,j}\right\}.$ Moreover, if $x$ is periodic, the same condition holds when  $\left\{\fA_{\rho_N,j}\right\}$ is replaced by the events  $\left\{\tilde\fA_{\rho_N,j}\right\}$ in the definition of $A_{I_1,\dots, I_r; i_1, \dots i_r}^N.$
\end{itemize}
\end{Prop}

The proof of Proposition~\ref{AdTag} follows a similar structure to that of Proposition 3.9 in \cite{DBS21} with some modifications, which is provided in Appendix~\ref{GTEM}.

Next, we will recall the exponential mixing property for expanding maps, which will be useful not only in the proof Proposition~\ref{AdTag} but also in the next section.
We denote $BV$ to be the space of functions with bounded variation on $\mathbb{S},$ and
$\DS \EXP\psi=\int\psi \rd\mu\,\,\,\text{for}\,\,\,\psi\in L^1(\mu).$
By Proposition 3.8 of \cite{Viana97},
there exist constants $C>0,$ $\theta<1$ such that for all $\psi_1,\,\psi_2\in BV,$
\begin{equation}
\label{PWM2}
\left| \EXP\left(\psi_1(\psi_2 \circ f^n)\right) - (\EXP\psi_1)( E\psi_2)\right| \leq  C
\|\psi_1\|_{BV} \|\psi_2\|_{L^1} \theta^n, \,n\in\mathbb{N},
\end{equation}
 where
$\|\psi\|_{BV}:=\|\psi\|_{L^1}+\text{Var}(\psi),$ and $\text{Var}(\psi)$ denotes the variation of~$\psi.$

\begin{Lem}
\label{LmPEMixBV}
Suppose $\varphi_j\in BV$ for $1\leq j\leq q.$
\begin{enumerate}
\item[(i)] If $\varphi_j\geq 0$  for $1\leq j\leq q,$  then for any $k_1\leq k_2\leq\cdots\leq k_q$ with $k_i\in\mathbb{N},$ we have
\begin{equation}
\label{PWMr}\left(\prod_{j=1}^{q-1} \left(\|\varphi_j\|_{L^1}-C\|\varphi_j\|_{BV}\theta^{k_{j+1}-k_j} \right)\right)\|\varphi_q\|_{L^1}\leq \left\|\prod_{j=1}^q\varphi_j\circ f^{k_j}\right\|_{L^1}
\end{equation}
$$\leq\left(\prod_{j=1}^{q-1} \left(\|\varphi_j\|_{L^1}+C\|\varphi_j\|_{BV}\theta^{k_{j+1}-k_j} \right)\right)\|\varphi_q\|_{L^1}$$
\item[(ii)]  If for each $j,$ $k_{j+1}-k_j\geq k,$ then
\begin{equation}
\label{GPWMr}\left|\EXP\left(\prod_{j=1}^q \varphi_j  \circ f^{k_j }\right)-\prod_{j=1}^q \EXP\varphi_j\right|\leq C\theta^k \prod_{j=1}^q \|\varphi_j\|_{BV}.
\end{equation}
\end{enumerate}
\end{Lem}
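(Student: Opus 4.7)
I will prove both parts by induction on $q$, using only the two-point bound \eqref{PWM2} together with the $f$-invariance of $\mu$ as analytic input; everything else is bookkeeping. The base case $q=1$ is trivial in both parts (the LHS and RHS coincide after using $\EXP(\varphi_1\circ f^{k_1})=\EXP\varphi_1=\|\varphi_1\|_{L^1}$ in the nonnegative case). For the inductive step, I first use $\mu$-invariance to shift time so that $k_1=0$, and rewrite the product as $\varphi_1\cdot(\Phi\circ f^{k_2})$ with $\Phi:=\prod_{j=2}^q\varphi_j\circ f^{k_j-k_2}$. Although $\Phi$ need not belong to $BV$, its $L^\infty$-norm (and hence $L^1$-norm) is controlled by $\prod_{j\geq 2}\|\varphi_j\|_\infty\leq\prod_{j\geq 2}\|\varphi_j\|_{BV}$, using the one-dimensional inequality $\|\psi\|_\infty\leq\|\psi\|_{BV}$ on $\mathbb{S}$. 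This is the key observation that lets me invoke \eqref{PWM2} with $\psi_1=\varphi_1$ (in $BV$) and $\psi_2=\Phi$ (only in $L^1$).

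For part (ii), \eqref{PWM2} yields
\[
\bigl|\EXP(\varphi_1\cdot\Phi\circ f^{k_2})-\EXP\varphi_1\cdot\EXP\Phi\bigr|\leq C\theta^{k_2}\|\varphi_1\|_{BV}\|\Phi\|_{L^1}\leq C\theta^k\prod_{j=1}^q\|\varphi_j\|_{BV}.
\]
The shifted times in $\Phi$ have consecutive gaps $(k_{j+1}-k_2)-(k_j-k_2)=k_{j+1}-k_j\geq k$, so the inductive hypothesis applied to $\Phi$ gives $|\EXP\Phi-\prod_{j\geq 2}\EXP\varphi_j|\leq C\theta^k\prod_{j\geq 2}\|\varphi_j\|_{BV}$. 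Multiplying by $|\EXP\varphi_1|\leq\|\varphi_1\|_{BV}$ and applying the triangle inequality finishes the induction, at the cost of enlarging $C$.

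For part (i), where all $\varphi_j\geq 0$, the same reduction and \eqref{PWM2} give
\[
\bigl(\|\varphi_1\|_{L^1}-C\|\varphi_1\|_{BV}\theta^{k_2-k_1}\bigr)\EXP\Phi\leq\EXP(\varphi_1\cdot\Phi\circ f^{k_2-k_1})\leq\bigl(\|\varphi_1\|_{L^1}+C\|\varphi_1\|_{BV}\theta^{k_2-k_1}\bigr)\EXP\Phi,
\]
where $\EXP\varphi_1=\|\varphi_1\|_{L^1}$ and $\EXP\Phi=\|\Phi\|_{L^1}$ by positivity. Induction applied to the nonnegative product $\Phi$ telescopes to reproduce exactly the two products appearing in \eqref{PWMr}. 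The only mild subtlety is that for the lower bound some factors $\|\varphi_j\|_{L^1}-C\|\varphi_j\|_{BV}\theta^{k_{j+1}-k_j}$ may be negative; whenever at least one factor is negative one can replace it by $0$ (equivalently, the inequality is trivial because the RHS is $\leq 0$ while the LHS is $\geq 0$), so the telescoping is harmless. I do not expect a genuine obstacle: the statement is essentially a telescoped version of \eqref{PWM2}, and the only care needed is to keep track of which norm ($BV$ versus $L^1$) is used at each slot of the product.
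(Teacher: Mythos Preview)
Your proof is correct and follows essentially the same inductive scheme as the paper: peel off $\varphi_1$ by applying \eqref{PWM2} with $\psi_1=\varphi_1$ and $\psi_2=\Phi=\prod_{j\geq 2}\varphi_j\circ f^{k_j-k_2}$, then recurse on $\Phi$. The only real difference is in part~(ii), where you bound $\|\Phi\|_{L^1}$ crudely via $\|\Phi\|_\infty\leq\prod_{j\geq 2}\|\varphi_j\|_\infty\leq\prod_{j\geq 2}\|\varphi_j\|_{BV}$, whereas the paper first proves part~(i) and then invokes it (applied to $|\varphi_j|$) to control the same quantity; both routes are valid and yield the same conclusion. Your caveat about negative factors in the part~(i) lower bound is not quite complete---if an \emph{even} number of factors are negative the product is positive and the inequality is not automatically trivial---but the paper's proof glosses over this point entirely, and in every application in the paper the gaps $k_{j+1}-k_j$ are large enough that all factors are positive.
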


Lemma~\ref{LmPEMixBV} could be proved by the arguments of Proposition~6.2 in \cite{DBS21} with $\|\cdot\|_{Lip}$
replaced by $\|\cdot\|_{BV}.$ However, in the present case a simpler argument is available
and we provide it in Appendix~\ref{AppExpMix} for the convenience of the reader.

\subsection{Proof of Theorem~\ref{ThGMLIL}}
\label{ScBCMults}

Fix a small $\delta>0$ and let
$$S_n'=\sum_{1\leq k\leq n} \left(X_k \one_{\{X_k\geq N^{1/s} (\ln N)^\delta\}}-b_N'\right), \quad
S_n''=\sum_{1\leq k\leq n} \left(X_k \one_{\{ X_k< N^{1/s} (\ln N)^\delta\}}-b_N''\right), $$
where $b_N'=b_N''=0$ if $s\leq1$ and $b_N', b_N''$ are chosen to so that the expectations of the left-hand sides are zero when $s>1.$

\begin{Lem}\label{lem5} If $\delta<\alpha$ and either $0<s<1$ or $1\leq s<2$ and $x$ is Diophanitne then
$$ \max_{1\leq n\leq N} \frac{\left|S_n''\right|}{N^{1/s} (\ln N)^\alpha}\rightarrow 0 \quad\text{a.s.}. $$
\end{Lem}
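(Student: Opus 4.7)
The plan is to establish a Bernstein--type concentration inequality for $S_n''$ via the exponential mixing of Lemma~\ref{LmPEMixBV}, then conclude via Borel--Cantelli along the dyadic subsequence $N=2^k$. Set $T=N^{1/s}(\ln N)^\delta$ and $\bar Y_k=X_k\one_{X_k<T}-b_N''$. From the tail \eqref{Tail} and direct integration one has $|\bar Y_k|\le CT$, $\EXP\bar Y_k^2=O(T^{2-s})$, and $\|\bar Y_k\|_{BV}=O(T)$ (the hard truncation produces a jump of height $\sim T$ near $x$). For $s<1$, $b_N''=0$ and $|\EXP S_n''|\le CNT^{1-s}=o(N^{1/s}(\ln N)^\alpha)$ because $\delta(1-s)<\delta<\alpha$; for $s>1$, $\bar Y_k$ is mean-zero by the choice of $b_N''$.

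The first step is a sharp variance bound $\Var(S_n'')=O(nT^{2-s})$. For $s<1$, Lemma~\ref{LmPEMixBV}(ii) with $q=2$ combined with $\|\bar Y\|_{L^1}=O(T^{1-s})$ directly gives $|\Cov(\bar Y,\bar Y\circ f^j)|\le C\theta^j T^{2-s}$, which sums to $O(T^{2-s})$. For $1<s<2$ the Diophantine assumption is needed: for lags $j\le \eps s\ln T$, the Diophantine condition ensures that $B(x,(a/T)^s)$ and $f^{-j}B(x,(a/T)^s)$ are disjoint, so at most one of $|y-x|,\,|f^jy-x|$ is small in the integrand and a direct estimate of $\int\phi(y)\phi(f^jy)\,d\mu$ near the singularities gives $|\Cov(\bar Y,\bar Y\circ f^j)|=O(1)$; for $j>\eps s\ln T$ the mixing bound $C\theta^j T$ kicks in with summable tail, dominated by $T^{2-s}$. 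In both cases, summation yields the claim.

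With the variance in hand, the standard big-block/small-block method, applied to exponential moments $\EXP[\exp(\lambda U)]$ with Lemma~\ref{LmPEMixBV}(ii) used to decouple block sums, produces a Bernstein-type inequality
\[
\Prob\bigl(\max_{n\le N}|S_n''|>t\bigr)\le CN\exp\Bigl(-c\min\bigl(t^2/(NT^{2-s}),\,t/T\bigr)\Bigr).
\]
With $t=\eps V_N=\eps N^{1/s}(\ln N)^\alpha$: since $\delta<\alpha$ and $s<2$ imply $2\alpha>\delta(2-s)$, $t^2/(NT^{2-s})=\eps^2(\ln N)^{2\alpha-\delta(2-s)}$ grows as a positive power of $\ln N$; similarly $t/T=\eps(\ln N)^{\alpha-\delta}\to\infty$. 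Hence the right-hand side decays faster than any polynomial in $1/N$ and is summable over $N=2^k$. Borel--Cantelli gives $\max_{n\le 2^k}|S_n''|/V_{2^k}\to 0$ a.s. For arbitrary $N\in[2^k,2^{k+1})$, $V_N\asymp V_{2^{k+1}}$ and the truncation levels are comparable; the correction $|S_n''(N)-S_n''(2^{k+1})|$ is controlled by the sum of $\phi$ over orbits hitting the thin annulus $\{T(2^k)\le\phi<T(2^{k+1})\}$, whose contribution is $o(V_N)$ by an elementary Markov estimate.

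The main technical difficulty is the sharp variance bound in the case $s>1$. The BV-norm of $\bar Y$ is $O(T)$, so the mixing bound $C\theta^j T$ only beats the trivial Cauchy--Schwarz bound $T^{2-s}$ for $j\gtrsim\ln T$, and a naive crossover argument introduces a spurious $\ln T$ factor in the variance. This factor would block the Bernstein estimate in the borderline regime $\alpha\le 1/s$, and this is precisely where the Diophantine hypothesis enters: it supplies an $O(1)$ direct estimate for covariances at short lags, bypassing the bad BV bound.
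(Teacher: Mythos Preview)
Your Bernstein approach has a genuine gap at the maximal inequality step. The bound you state,
\[
\Prob\bigl(\max_{n\le N}|S_n''|>t\bigr)\le CN\exp\Bigl(-c\min\bigl(t^2/(NT^{2-s}),\,t/T\bigr)\Bigr),
\]
is not summable along $N=2^k$. With $t=\eps N^{1/s}(\ln N)^\alpha$ one computes $t/T=\eps(\ln N)^{\alpha-\delta}$ and $t^2/(NT^{2-s})=\eps^2(\ln N)^{2\alpha-\delta(2-s)}$; since $tT^{s-1}/N=\eps(\ln N)^{\alpha+\delta(s-1)}\to\infty$ for all $0<s<2$, the minimum is asymptotically $t/T$. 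Thus the right-hand side is $CN\exp(-c(\ln N)^{\alpha-\delta})$, and for $N=2^k$ this is $C\,2^k\exp(-c'k^{\alpha-\delta})$. Nothing in the hypotheses forces $\alpha-\delta>1$: in the theorem $\alpha\le 1/(rs)$ can be arbitrarily small, and $\delta$ is only required to satisfy $\delta<\alpha$. For $\alpha-\delta\le 1$ the terms do not even tend to zero, so Borel--Cantelli fails. The big-block/small-block argument yields a tail bound for the \emph{terminal} sum $S_N''$, not for the maximum; the factor $CN$ comes from a union bound and is fatal here. Your claim that the right-hand side ``decays faster than any polynomial in $1/N$'' is simply incorrect in this regime.

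This is exactly the obstacle the paper's proof is built around. The paper introduces a further splitting $S''=\bar S+\bar{\bar S}$ at level $N^{1/s}(\ln N)^{-D}$ with $D$ large, and for $1\le s<2$ controls the maximum via a \emph{reflection principle}: if some extremal component $C_n$ sees $|\bar S_n|\ge \eps N^{1/s}(\ln N)^\alpha$, then with probability bounded below (uniformly in $n$) one still has $|\bar S_N|\ge \tfrac{\eps}{2}N^{1/s}(\ln N)^\alpha$. This converts the maximal probability into a constant multiple of the terminal probability, for which a Chebyshev/moment bound with the extra $(\ln N)^{-D(2-s)}$ gain from the low-level truncation suffices. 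The same reflection device is applied to $\bar{\bar S}$ using higher-moment bounds. No exponential concentration is needed, and the $\ln N$ losses are absorbed by choosing $D$ large. Your sketch is missing precisely this mechanism for passing from a fixed time to the running maximum.

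A secondary concern: the asserted $O(1)$ bound on $\Cov(\bar Y,\bar Y\circ f^j)$ for short lags $j\lesssim \ln T$ when $1<s<2$ is not justified by the Diophantine condition as stated. That condition only excludes simultaneous visits to $B(x,\rho)$; it does not prevent $|f^jy-x|^{-1/s}$ from being of order $\rho^{-1/s}$ on $B(x,\rho)$, and a direct estimate produces a bound growing with $j$. The paper sidesteps this entirely by the two-level truncation: at the low level the trivial Cauchy--Schwarz bound over $O(\ln N)$ lags contributes a harmless extra $\ln N$ factor (see the estimates leading to \eqref{DBound2}), compensated by the large $D$.
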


The proof of Lemma~\ref{lem5} is the main part of this paper and is contained in the next section. Given Lemma~\ref{lem5}, we proceed to show Theorem~\ref{ThGMLIL}.

Let
$$
W_N'(t)=\frac{S_{Nt}'}
{N^{1/s} (\ln N)^\alpha}.
$$
By Lemma \ref{lem5}, the limit points of
$W_N(\cdot)$ and $W_N'(\cdot)$ are the same.
Consider the targets $$\fA_{\rho_N}=\{y:\phi(y)>\rho_N\},$$
where $\rho_N=N^{1/s} (\ln N)^{\delta}.$ By Proposition~\ref{AdTag}(c) and Lemma~\ref{LmMultiBC}(b), for sufficiently for large $N,$ there exist at most $\left[(\delta s)^{-1}\right]$ intervals $I_j$ of length $2 s(N)$ such that every
$n$ satisfying
$X_n>N^{1/s} (\ln N)^\delta$ is contained in one of those intervals.

Moreover, by choosing $\bralpha$ such that $((r+1) s)^{-1}<\bralpha<\alpha,$ we see that among those intervals, at most $r$
contain  $n$ for which $$X_n\geq N^{1/s} (\ln N)^{\bralpha}.$$

We claim that the contribution of the remaining intervals is negligible for almost every $y.$
Indeed, suppose there is an interval $I$ of length $2 s(N)$
and $\brn\in I$ such that
\begin{equation}
\label{TwoHumps}
\max_{n\in I} X_n=X_\brn\leq N^{1/s} (\ln N)^\bralpha\;\; \mathrm{and}\;\; \sum_{n\in I} X_n\geq \eps N^{1/s} (\ln N)^\alpha.
\end{equation}
Let
$$I'=\{n\in I: |n-\brn|\leq (\ln N)^{(\alpha-\bralpha)/2}\}, \quad I''=I\setminus I', \quad
 Y_I=\max_{n\in I''}  X_n.$$ Then
$$ \sum_{n\in I} X_n\leq \sum_{n\in I'} X_n+\sum_{n\in I''} X_n\leq N^{1/s}  (\ln N)^{(\alpha+\bralpha)/2}
+2 Y_I s(N).
$$
By the definition of $I,$ we have
$$\DS Y_I\geq \breps N^{1/s}  (\ln N)^{\alpha-1}$$
for some
$\breps=\breps(\eps)>0.$ Accordingly, if the interval $I$ satisfies~\eqref{TwoHumps}, then
there exist two moments $\brn, \hn\in [1, N]$ such that
\begin{equation}
\label{TwoHumps2}
(\ln N)^{(\alpha-\bralpha)/2} \leq  |\brn-\hn|\leq 2 s(N),\quad  \min\left\{X_\brn,\,X_\hn\right\}>\breps N^{1/s}(\ln N)^{\alpha-1}.
\end{equation}
Thus, the following lemma establishes that the contribution of the intervals satisfying \eqref{TwoHumps} is negligible. Its proof is provided in Appendix \ref{GTEM}.
\begin{Lem}
\label{LmTwoHumps}
For almost every $y,$ \eqref{TwoHumps2} happens only for finitely many $N.$
\end{Lem}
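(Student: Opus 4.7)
The plan is to apply the classical first Borel--Cantelli lemma along dyadic blocks $N\in[2^m,2^{m+1}]$, using the exponential mixing estimate of Lemma~\ref{LmPEMixBV}(i) to bound the probability that two widely separated terms $X_\brn, X_\hn$ are simultaneously large. (The monotonicity hypothesis of Lemma~\ref{LmMultiBC} fails for the thresholds arising here, since $\rho_N=\breps N^{1/s}(\ln N)^{\alpha-1}$ is increasing in $N$, so I avoid that machinery and work directly.)

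First, on a dyadic block $N\in[2^m,2^{m+1}]$ the thresholds and the window lengths in \eqref{TwoHumps2} change only by bounded constants. Consequently, if \eqref{TwoHumps2} holds for some such $N$, then there exist $\brn<\hn\leq 2^{m+1}$ with
\[
c_1 m^{(\alpha-\bralpha)/2}\;\leq\;\hn-\brn\;\leq\; c_2 m
\quad\text{and}\quad
\min\bigl(X_\brn(y),X_\hn(y)\bigr)\;\geq\;\rho_m:=c_3\,2^{m/s}m^{\alpha-1}
\]
for absolute positive constants $c_1,c_2,c_3$. Call this event $B_m$; it suffices to prove $\sum_m\mu(B_m)<\infty$.

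Next, by \eqref{Max-Dist} the set $\fA_{\rho_m}=\{y:\phi(y)>\rho_m\}$ is, for large $m$, an interval around $x$ of length $\sim a^s\rho_m^{-s}$, so $\|\one_{\fA_{\rho_m}}\|_{BV}$ is bounded by an absolute constant and
\[
\sigma_m:=\mu(\fA_{\rho_m})\;\sim\;a_s\,\rho_m^{-s}\;\asymp\;2^{-m}\,m^{s(1-\alpha)}.
\]
Since the gap $\hn-\brn\gtrsim m^{(\alpha-\bralpha)/2}\to\infty$, Lemma~\ref{LmPEMixBV}(i) with $q=2$ and $\varphi_1=\varphi_2=\one_{\fA_{\rho_m}}$ gives
\[
\mu\bigl(\fA_{\rho_m}^{\brn}\cap\fA_{\rho_m}^{\hn}\bigr)
\;\leq\;\bigl(\sigma_m+C\theta^{\hn-\brn}\bigr)\sigma_m
\;\leq\;2\sigma_m^{2}
\]
for all sufficiently large $m$, because $\theta^{m^{(\alpha-\bralpha)/2}}$ decays faster than any polynomial in $m$ while $\sigma_m$ is only polynomially small in $m$ relative to $2^{-m}$. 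A union bound over the at most $C\cdot 2^m m$ admissible ordered pairs $(\brn,\hn)$ then yields
\[
\mu(B_m)\;\leq\;C\,2^m m\cdot \sigma_m^{2}\;\leq\;C'\,2^{-m}\,m^{1+2s(1-\alpha)},
\]
which is summable in $m$ regardless of the sign of the exponent of $m$, thanks to the geometric factor $2^{-m}$.

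The classical Borel--Cantelli lemma gives $\mu(\limsup_m B_m)=0$, and since the event in \eqref{TwoHumps2} for any $N\in[2^m,2^{m+1}]$ implies $B_m$, this proves the lemma. The only slightly delicate point is justifying that the mixing error $\theta^{\hn-\brn}$ is negligible even though $\hn-\brn$ is only a fractional power of $\ln N$ rather than a power of $N$; but the strict inequality $\bralpha<\alpha$ ensures $(\alpha-\bralpha)/2>0$, which is all that is needed for the separation to tend to infinity and the mixing term to decay superpolynomially.
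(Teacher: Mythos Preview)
Your overall approach---dyadic blocking, a two-point mixing bound, a union bound over admissible pairs, then Borel--Cantelli---is exactly the paper's. There is, however, a genuine error in the step where you claim
\[
\bigl(\sigma_m+C\theta^{\hn-\brn}\bigr)\sigma_m\;\leq\;2\sigma_m^{2}.
\]
This would require $\theta^{\hn-\brn}\lesssim\sigma_m$, but the inequality goes the other way: $\sigma_m\asymp 2^{-m}m^{s(1-\alpha)}$ decays \emph{exponentially} in $m$, whereas the minimal gap satisfies $\hn-\brn\geq c_1 m^{(\alpha-\bralpha)/2}$, so $\theta^{\hn-\brn}$ decays only stretched-exponentially and is much \emph{larger} than $\sigma_m$ for large $m$. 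Your justification (``$\sigma_m$ is only polynomially small in $m$ relative to $2^{-m}$'') precisely overlooks the $2^{-m}$ factor that matters here.

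The fix is immediate: keep both terms from Lemma~\ref{LmPEMixBV}(i), so that
\[
\mu\bigl(\fA_{\rho_m}^{\brn}\cap\fA_{\rho_m}^{\hn}\bigr)\;\leq\;\sigma_m^{2}+C\sigma_m\,\theta^{\hn-\brn},
\]
and sum each piece separately over the $O(2^{m}m)$ admissible pairs. The $\sigma_m^{2}$ contribution gives your $O(2^{-m}m^{1+2s(1-\alpha)})$, while the dominant cross term gives
\[
O\Bigl(2^{m}m\cdot\sigma_m\,\theta^{c_1 m^{(\alpha-\bralpha)/2}}\Bigr)
=O\Bigl(m^{1+s(1-\alpha)}\,\theta^{c_1 m^{(\alpha-\bralpha)/2}}\Bigr),
\]
which is summable because $\theta^{m^{(\alpha-\bralpha)/2}}$ beats any polynomial. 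This is exactly the bound in the paper's proof (the paper records only this dominant term). So your strategy is correct and matches the paper; only the comparison of $\theta^{\hn-\brn}$ with $\sigma_m$ needs to be reversed.
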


It then follows that the limit points of
$W_N'(\cdot)$ have at most~$r$ positive jumps. That is,
 all limit points are in $\bbH_r.$ \smallskip

To verify that all points in $\bbH_r$ are realizable, we fix an
$r$-tuple $(c_1, \dots, c_r)$ such that $c_j>0$ for all $j,$ together with an increasing $r$-tuple $(t_1, \dots, t_r)$ satisfying $t_1 < t_2 < \cdots < t_r.$

If $x$ is not a periodic point,   we define the events
$$\fA_{\rho_N,j}=\left\{y:\frac{\phi(y)}{\rho_N} \in [c_j-\eps, c_j+\eps]\right\},\,\,1\leq j\leq r$$
for $0<\eps<c_j.$

If $x$ is a periodic point of period $q,$ we instead choose
$0<\eps<\left(\gamma^{q/s}-1\right)c_j/\left(\gamma^{q/s}+1\right)$ and consider the events
$$\tilde\fA_{\rho_N,j}=\fA_{\rho_N,j}^c\mathrel{\scalebox{1}{$\bigcap$}} f^{-1}\fA_{\rho_N,j}^c\mathrel{\scalebox{1}{$\bigcap$}}\cdots \mathrel{\scalebox{1}{$\bigcap$}} f^{-(p_0-1)}\fA_{\rho_N,j}^c\mathrel{\scalebox{1}{$\bigcap$}}f^{-p_0}\fA_{\rho_N,j},\,\,1\leq j\leq r,$$
where $p_0=\ln\ln N.$

Taking $\rho_N=N^{1/s} (\ln N)^\alpha$ and applying Proposition~\ref{AdTag}\,(d) and Lemma~\ref{LmMultiBC}\,(d), we conclude that with probability $1$ there is a subsequence $N_k$ and times $n_1(k), n_2(k), \dots, n_r(k)$ such that
$$\frac{n_j(k)}{N_k} \in [t_j-2\eps, t_j+2\eps]\quad\text{and}\quad
\frac{X_{n_j(k)}(y)}{N_k^{1/s} (\ln N_k)^\alpha}\in [c_j-\eps, c_j+\eps] . $$
 It follows that for each
$W\in \bbH_r$,
 $W$ belongs to the limit set with probability $1$ by taking a countable set of $\eps_l$ converging to $0.$ Recall the fact that two elements  $W_1$ and $W_2$ of $\bbH_r$
with exactly $r$ jumps
are $J_1$ close if and only if $\{c_m^1\}_{m=1}^r$ is close to $\{c_m^2\}_{m=1}^r$ and  $\{t_m^1\}_{m=1}^r$ is close to $\{t_m^2\}_{m=1}^r$.
 Taking a countable dense subset in $\bbH_r$
 completes the proof.

\section{Proof of Lemma~\ref{lem5}.}
\label{ScHeavy}

\subsection{Splitting the sum}

The proof of Lemma \ref{lem5} will be given in the next three subsections. Fix a large number $D$ satisfying
\begin{align}
\label{D-SmallS}
D(1-s)+\alpha>1 & \quad\text{if}\quad  s<1; \\
\label{D-LargeS}
D(2-s)+2\alpha> 3 & \quad\text{if}\quad  1\leq s<2.
\end{align}
and split $S_N'' =\bar{S}_N + \bar{\bar S}_N $ where
$\DS \bar{S}_n = \sum_{k=1}^{n}  \bar{X}_k$ and
$\DS \bar{\bar S}_n =   \sum_{k=1}^{n} \bar{\bar{X}}_k$
for
$$\bar{X}_k=X_k \one_{\{X_k<N^{1/s} (\ln N)^{-D} \}}-\bar{b}_N\,\,\,\text{and}\,\,\,\bar{\bar{X}}_k= X_k\one_{ \{ N^{1/s} (\ln N)^{-D} \leq X_k < N^{1/s}(\ln N)^\delta \} }-\bar{\bar{b}}_N,$$
where, as before, $\bar{b}_N=\bar{\bar{b}}_N=0$ for $s\leq1$ and they are chosen so that the expectations of the left-hand sides are zero for $s>1.$

We will show that
\begin{equation}
\label{SmallVal}
\max_{1\leq n\leq N}  \frac{|\brS_n|}{N^{1/s} (\ln N)^\alpha}\to 0\;\; \text{a.s.}
\end{equation}
and
\begin{equation}
\label{IntVal}
\max_{1\leq n\leq N}  \frac{|\brbrS_n|}{N^{1/s} (\ln N)^\alpha}\to 0\;\; \text{a.s.}.
\end{equation}

In \S~\ref{SSS<1}, we consider the case $s<1,$ where the argument is much simpler, since the sums
$\brS_n$ and $\brbrS_n$ are almost monotone. The case $1\leq s<2$ is more involved and requires the
application of the reflection principle (see equations \eqref{EqRefl} and \eqref{EqRefl2}). In \S~\ref{SSLow} and
\S~\ref{SSInt}, we establish
\eqref{SmallVal} and \eqref{IntVal}, respectively, for $1\leq s<2.$

\subsection{The case of infinite expectation.}
\label{SSS<1}

Here we prove \eqref{SmallVal} and \eqref{IntVal} in case $s<1.$

We begin by considering \eqref{SmallVal}.
By \eqref{Tail}, we have
$$\DS E \left|\bar{S}_N\right|\leq N \int_0^{N^{1/s}(\ln N)^{-D} } \frac{a_s\rd t}{t^s}+LN\leq  C N^{1/s} (\ln N)^{-D(1-s)},$$
for $L=\max_{x\in\mathbb{S}}\left|\psi(x)\right|.$
Hence
$$ E\left| \frac{\bar S_N}{ N^{1/s} (\ln N)^{\alpha} } \right| \leq
\frac{C}{(\ln N)^{D(1-s) + \alpha} }.$$
Let $ N_k= 2^k.$ For each $\varepsilon >0$, we have
\begin{equation}
\label{DBound1}
\sum_{k\geq 1} \mu\left( \left| \frac{\bar{S}_{N_k}}{N_k^{1/s} (\ln N_k)^\alpha }  \right| > \varepsilon \right)  \leq \sum_{k\geq 1} \frac{C}{\varepsilon k ^{D(1-s) + \alpha} } < \infty
\end{equation}
where the convergence follows from \eqref{D-SmallS}. By the classical Borel-Cantelli lemma
\begin{equation}
\label{LowTermSubSeq}
 \frac{|\bar{S}_{N_k}|}{N_k^{1/s} (\ln N_k)^\alpha } \rightarrow 0 \quad \text{a.s.}.
\end{equation}
Next, consider the case $2^k < N \leq 2^{k+1}.$ Since
$\DS |\bar{S}_N|\leq |\bar{S}_{N_{k+1}}|+2L N_{k+1}$ and $s<1$, \eqref{LowTermSubSeq}
yields
\begin{equation}
\label{BarSSmall}
  \frac{|\bar{S}_{N}|}{N^{1/s} (\ln N)^\alpha } \rightarrow 0 \quad \text{ a.s.}
\end{equation}
which completes the proof of~\eqref{SmallVal}.

\[
\mu(\hB_{N,j}) \sim 2 a^s h(x) N^{-1} (\ln N)^{-j\delta s}.
\]

Next, we prove \eqref{IntVal}.  Let $D$ be chosen such that $D = q \delta,$ where $q$ is a positive integer.
For $j=0,-1,\cdots,-q,$ define
\bea\label{hBNj} \hB_{N,j}=\left\{y: \phi(y)\in \left(N^{1/s}(\ln N)^{j\delta},\; N^{1/s}(\ln N)^{(j+1)\delta} \right]\right\},\eea
and
\bea\label{bbDnj} \brbrS_{n, j}=\sum_{k=1}^n \phi(f^ky)\one_{\hB_{N,j}}(f^ky),\eea
where $\one.$ is the indicator function.
It suffices to show that for each $j$,
\begin{equation}
  \label{IntValJ}
  \max_{1\leq n\leq N}\frac{ |\brbrS_{n,j}|}{N^{1/s} (\ln N)^\alpha}\to 0.
\end{equation}
Since $|\brbrS_{n,j}|\leq|\brbrS_{N,j}|+2LN$ and $s<1,$
it suffices to prove that
\begin{equation}
  \label{IntValJEnd}
  \frac{|\brbrS_{N,j}|}{N^{1/s} (\ln N)^\alpha}\to 0.
\end{equation}

The next result is proven in Appendix \ref{GTEM}.
\begin{Lem}\label{LmCollar}
\begin{itemize}
\item[(a)] If $x$ is not a periodic point then
$ \DS \EXP\brbrS_{N,j}^m\leq C N^{m/s}(\ln N)^{\delta m}.$
\item[(b)] If $x$ is periodic then
$\DS \EXP\brbrS_{N,j}^m\leq C N^{m/s}(\ln N)^{\delta m}\left(\ln\ln N\right)^m.$
\end{itemize}
\end{Lem}

By Markov's inequality
$$ \mu\left(|\brbrS_{N,j}|\geq \eps N^{1/s} (\ln N)^\alpha\right)\leq \frac{C\left(\ln\ln N\right)^m}{\eps^m (\ln N)^{(\alpha-\delta)m}}$$
for even $m.$ Taking $m$ such that $m(\alpha-\delta)>1$ and
arguing as in the proof of \eqref{BarSSmall} we obtain
\begin{equation}
\label{BarBarSSmall}
\frac{|\bar{\bar S}_{N, j}|}{N^{1/s} (\ln N)^\alpha}\rightarrow 0 \quad \text{a.s.}.
\end{equation}
Since $j$ is arbitrary, we obtain \eqref{IntVal} for $s<1.$

\subsection{Controlling small values of $X$ for $1\leq s<2$}
\label{SSLow}

We now prove  \eqref{SmallVal} for $1\leq s<2.$

Given $y\in \mathbb{S},$ let $C_n(y)$ be the continuity component of
$\phi 1_{\phi\leq N^{1/s}(\ln N)^{-D}}\circ f^k$ for all $k\leq n,$ containing $y.$ Then $f^n|_{C_n(y)}$ be a diffeomorphism. Define $I_n(y)=f^n C_n(y).$
We say that $C_n(y)$ is {\it short} if the length of $I_n(y)$ is less than $N^{-1}(\ln N)^{-100},$
and {\it long} otherwise.

Fix $\varepsilon>0.$ We say that $C_n$ is {\it extremal} if there exists $z\in C_n$ such that
$$\left|\brS_n(z)\right|\geq \eps N^{1/s} (\ln N)^\alpha$$ and for each $k<n,$
$\DS \left|\brS_k(\cdot)\right|<\eps N^{1/s} (\ln N)^\alpha$ on $C_n.$
Note that different extremal components are disjoint.  Moreover, we have
\begin{equation}
\label{LargeEverywhere}
 \left|\brS_n(y)\right|\geq \frac{3}{4}  \eps N^{1/s} (\ln N)^\alpha \quad \text{for all }y \text{ in }C_n.
\end{equation}

Indeed, fix a large $K$ and write
$$ \left|\brS_n(y)-\brS_n(z)\right|\leq \sum_{k=1}^{n-K\ln N} \left|\brX_k(y)-\brX_k(z)\right|+
\left|\brS_{K\ln N}(f^{n-K\ln N} y)\right|+\left|\brS_{K\ln N}(f^{n-K\ln N} z)\right|.$$
The last two terms are smaller than $2K N^{1/s} (\ln N)^{1-D}$ due to the cutoff in the definition of
$\brS_n.$ To estimate the first term, observe that since $d(f^n y, f^n z)\leq 1$ and $f$ is expanding,
for $k\leq n-K\ln N,$  we have $d(f^k y, f^k z)\leq \theta^{K\ln N}.$ Since
$\phi \one_{\{ \phi<N^{1/s} (\ln N)^{-D} \}}$ is Lipschitz on its continuity components
with a Lipschitz constant less than $C N^{1+1/s}(\ln N)^{-(1+s)D}$,
it follows that if $K$ is sufficiently large, then
$$\left|\brX_k(y)-\brX_k(z)\right|\leq N^{-100},$$
which proves  \eqref{LargeEverywhere}.

Let $\cE_N$ be the set of points which
belong to some extremal component for $n\leq N.$

\begin{Lem}
\label{LmExtreme}
If $D$ is  is sufficiently large, then $\mu(\cE_N)\leq {C}{(\ln N)^{-2}}.$
\end{Lem}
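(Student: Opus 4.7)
The plan is to combine a variance bound for $\brS_N$ with a first-passage argument that converts a tail estimate at the terminal time $N$ into one on $\mu(\cE_N)$. From \eqref{Tail} and the cutoff $\brX_k\leq N^{1/s}(\ln N)^{-D}$, one has $\EXP(\brX_k^2)\leq C(N^{1/s}(\ln N)^{-D})^{2-s}$; the cross-covariances $|\EXP(\brX_i\brX_j)-\EXP(\brX_i)\EXP(\brX_j)|$ decay as $\theta^{|i-j|}(N^{1/s}(\ln N)^{-D})^2$ by the exponential mixing estimate \eqref{PWM2} applied to $\brX_k=\varphi\circ f^k$ for a single BV function $\varphi$. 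Summing, $\Var(\brS_N)\leq CN^{2/s}(\ln N)^{-D(2-s)}$, and Chebyshev together with \eqref{D-LargeS} yields
$$\mu\!\left(|\brS_N|\geq \tfrac{\eps}{4}N^{1/s}(\ln N)^\alpha\right)\leq C(\ln N)^{-(D(2-s)+2\alpha)}\leq C(\ln N)^{-3}.$$

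Next, let $\tau(y)$ denote the first time $n\leq N$ at which $y$ lies in an extremal component, so the sets $\{\tau=n\}$ are disjoint. On an extremal $C_n$, \eqref{LargeEverywhere} forces $|\brS_n(y)|\geq\tfrac{3}{4}\eps N^{1/s}(\ln N)^\alpha$ with a single sign throughout $C_n$; writing $\brS_N(y)=\brS_n(y)+\brS_{N-n}(f^ny)$, the event $|\brS_{N-n}(f^ny)|\leq\tfrac{\eps}{2}N^{1/s}(\ln N)^\alpha$ forces $|\brS_N(y)|\geq\tfrac{\eps}{4}N^{1/s}(\ln N)^\alpha$. For a long extremal component ($|I_n|\geq N^{-1}(\ln N)^{-100}$), bounded distortion of $f^n$ on $C_n$ makes the pushforward of $\mu|_{C_n}/\mu(C_n)$ under $f^n$ comparable to $\mu|_{I_n}/\mu(I_n)$, and \eqref{PWM2} applied after a further $O(\ln N)$ iterates shows that the conditional distribution of $\brS_{N-n}(f^ny)$ given $y\in C_n$ is close to its unconditional distribution; the Chebyshev bound above then gives this event conditional probability $\geq 1/2$. (When $N-n$ is below the mixing scale $O(\ln N)$, $|\brS_N-\brS_n|\leq (N-n)N^{1/s}(\ln N)^{-D}\ll N^{1/s}(\ln N)^\alpha$ deterministically, so the reflection is automatic.) Summing over the disjoint $\{\tau=n\}$,
$$\mu(\cE_N^{\mathrm{long}})\leq 2\mu\!\left(|\brS_N|\geq\tfrac{\eps}{4}N^{1/s}(\ln N)^\alpha\right)\leq C(\ln N)^{-3}.$$

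The short extremal contribution ($|I_n|<N^{-1}(\ln N)^{-100}$) requires a further split of the short stratum. On the moderately short range $|I_n|\in[N^{-K},N^{-1}(\ln N)^{-100})$ with $K$ a large fixed constant, the mixing time remains $O(\ln N)$ and the argument above applies without essential change. On the very short range $|I_n|<N^{-K}$, $f^ny$ must lie within $N^{-K}$ of the set $P_n=\bigcup_{j=0}^nf^j(\partial B_N)$ of at most $2(n+1)$ points; $f$-invariance of $\mu$ then gives $\mu(\{y:C_n(y)\text{ is very short}\})\leq CnN^{-K}$, summing in $n\leq N$ to $O(N^{2-K})$, which is $o((\ln N)^{-2})$ for $K$ sufficiently large. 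The main obstacle I expect is this short analysis: the bare threshold $N^{-1}(\ln N)^{-100}$ is not itself summable in $n$ (the naive union bound is $O(N(\ln N)^{-100})$), so the proof must refine the short stratum as above, or instead use the Diophantine hypothesis on $x$ to control the near-coincidences among the iterates $\{f^j(\partial B_N)\}_{j\leq N}$ that are responsible for short images $I_n$.
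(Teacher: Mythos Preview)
Your overall strategy---Chebyshev at the terminal time $N$ plus a reflection (first-passage) argument transferring this to $\mu(\cE_N)$---is exactly the paper's. Two points warrant attention.

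First, the covariance estimate as you state it does not produce the claimed variance bound. Even the sharper mixing estimate $|\Cov(\brX_i,\brX_j)|\leq C\|\phi_N\|_{BV}\|\phi_N\|_{L^1}\theta^{|i-j|}=O(N^{1/s}(\ln N)^{-D})\,\theta^{|i-j|}$ (and certainly your bound with $(N^{1/s}(\ln N)^{-D})^2$) sums over $i\ne j$ to a term of order $N^{1+1/s}(\ln N)^{-D}$, which for $s>1$ is much larger than $N^{2/s}$ and destroys the Chebyshev step. The paper instead splits: for $|i-j|<K\ln N$ it bounds $|\Cov|\leq\Var(\brX_0)=O(N^{2/s-1}(\ln N)^{-D(2-s)})$ by Cauchy--Schwarz; for $|i-j|\geq K\ln N$ the factor $\theta^{K\ln N}$ beats the BV norm, giving $|\Cov|\leq N^{-2}$. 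This yields $\Var(\brS_N)\leq CN^{2/s}(\ln N)^{1-D(2-s)}+O(1)$, and \eqref{D-LargeS} then gives the tail bound \eqref{NkCheb}.

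Second, your short-component obstacle is illusory and the two-scale split unnecessary. The paper observes that a boundary point $p$ of $C_n(y)$ satisfies $f^k(p)\in\{z_{-,N},z_{+,N}\}$ for some $k\leq n$; since $f$ is expanding and $f^n$ is injective on $C_n$, one has $|f^kC_n|\leq|f^nC_n|=|I_n|$, so if $|I_n|<N^{-1}(\ln N)^{-100}$ then $d(f^ky,\{z_{\pm,N}\})<N^{-1}(\ln N)^{-100}$ for that particular $k$. Therefore the union over all $n\leq N$ of short components lies in $\bigcup_{k=1}^N f^{-k}\bigl(B(\{z_{\pm,N}\},N^{-1}(\ln N)^{-100})\bigr)$, of measure $O(N\cdot N^{-1}(\ln N)^{-100})=O((\ln N)^{-100})$. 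The trick is to pull back to the two discontinuity points rather than push forward to your $O(n)$-point set $P_n$: this removes the factor of $n$ from your estimate and makes a single union bound over $k\leq N$ work at the natural threshold.
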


Lemma \ref{LmExtreme} implies \eqref{SmallVal}. In fact, combining this lemma with the classical Borel-Cantelli lemma
we conclude that with probability $1$, for sufficiently large $k$, we have
$$\left|\brS_n\right|<\eps N_k^{1/s}(\ln N_k)^\alpha\,\,\,\text{for}\,\,\, n \leq N_k, \,\,\,N_k = 2^k.$$
Since $\eps$ is arbitrary, \eqref{SmallVal} follows.

\begin{proof}[Proof of Lemma \ref{LmExtreme}] Note that $\phi \one_{\{\phi< N^{1/s} (\ln N)^{-D}\}}$
is only discontinuous at two points $z_{-, N}$ and $z_{+, N}$ where $\phi(z_{\pm,N})=N^{1/s} (\ln N)^{-D}.$
Therefore, if $y$ belongs to a short component, there exists $k\leq n$ such that
$$ d\left(f^k y, \{z_{-,N}, z_{+, N}\}\right)\leq N^{-1}(\ln N)^{-100} . $$
Since for a fixed $k,$
$\DS \mu\left(\left\{y:\,d\left(f^k y, \{z_{-,N}, z_{+, N}\}\right)\leq N^{-1}(\ln N)^{-100}\right\}\right)=O\left(N^{-1}(\ln N)^{-100}\right), $
we obtain that the measure of points which belong to short components is
$\DS O\left((\ln N)^{-100}\right).$ It remains to estimate the measure of points which belong to long extreme components.
Let $C_n$ be one such component and $I_n=f^n C_n.$ We will prove two estimates:
\begin{equation}
  \label{EqRefl}
\mu\left(y\in C_n: |\brS_N(y)|\geq \frac{\eps}{2} N^{1/s} (\ln N)^\alpha\right) \geq C \mu(C_n)
\end{equation}
for some $C>0$ independent of $N,$ $C_n$ and
\begin{equation}
  \label{NkCheb}
\mu\left(y\in \mathbb{S}: |\brS_N(y)|\geq \frac{\eps}{2} N^{1/s} (\ln N)^\alpha\right)\leq \frac{C}{(\ln N)^2}
\end{equation}
for some $C>0$ independent of $N.$

To prove \eqref{NkCheb} we use Chebyshev's inequality. By \eqref{Tail} we have
\begin{equation}
\label{S<1TrV}
 \Var(\brX_n)\leq C N^{2/s-1} (\ln N)^{{-D(2-s)}}.
\end{equation}
We now estimate
\begin{equation} \label{CovEst}
\left|\text{Cov}(\brX_n, \brX_m)\right|\leq \begin{cases}
O\left(N^{2/s-1} (\ln N)^{{-D(2-s)}}\right) &
\text{if }|m-n|<K\ln N, \\
N^{-2} & \text{if }|m-n|\geq K\ln N, \end{cases}
\end{equation}
where the first bound follows from \eqref{S<1TrV} and Cauchy-Schwartz inequality and the second bound follows from \eqref{PWM2}.

Summing over all $m, n$ we get
\begin{equation}
\label{DBound2}
 \Var(\brS_N)\leq C\left(N^{2/s} (\ln N)^{{ 1-D(2-s)}}+1\right).
\end{equation}
Since $D(2-s)+ 2\alpha>3$ due to \eqref{D-LargeS},
\eqref{NkCheb} follows by Chebyshev's inequality.

The proof of \eqref{EqRefl} follows similar ideas. Namely, in view of
\eqref{LargeEverywhere},
it suffices to show that
there exists $\tilde{C}<1$ such that
\begin{equation}
  \label{ReflLoc}
  \mu\left(y\in C_n: \left|\sum_{k=n+1}^N \brX_k(y)\right|>\frac{\eps}{ 4} N^{1/s} (\ln N)^\alpha\right)
  \leq \tilde{C} \mu(C_n).
\end{equation}

Since $f^n$ has bounded distortion property, \eqref{ReflLoc} follows from
\begin{equation}
  \label{ReflTCh}
\mu\left(z\in I_n: |\brS_{N-n}(z)|>   \frac{\eps}{ 4} N^{1/s} (\ln N)^\alpha\right)
  \leq \tilde{C} \mu(I_n).
\end{equation}
Split $\brS_{N-n}=\brS_{K\ln N}+(\brS_{N-n}-\brS_{K\ln N}).$
The first term is bounded by
\begin{equation}
\label{Buffer}
  O\left(N^{1/s} (\ln N)^{1-D}\right).
\end{equation}
Note that
$$
\EXP\left((\brS_{N-n}-\brS_{K\ln N})\one_{I_n}\right)^2=
\sum_{K\ln N<k_1\leq k_2\leq N-n}\EXP\left(\bar{X}_{k_1}\bar{X}_{k_2}\one_{I_n}\right). $$
Recall that $\brX_{k_1}=\phi_N\circ f^{k_1}$ for $\phi_N=\phi\one_{\{\phi<N^{1/s}(\ln N)^{-D}\}}.$
Thus denoting $p=K\ln N/2,$ we obtain
$$ \EXP\left(\bar{X}_{k_1}\bar{X}_{k_2}\one_{I_n}\right)=\EXP\left((\phi_N\circ f^{k_1-p})  (\phi_N\circ f^{k_2-p})  \right) \mu(I_n) +O\left(\left\|(\phi_N\circ f^{k_1-p})  (\phi_N\circ f^{k_2-p})\right\|_{L_1}\eta_{n,p}\right)
$$
by \eqref{PWM2}, where
$$ |\eta_{n,p}|\leq C\|I_n\|_{BV}\theta^p\leq C \theta^p. $$
If $K$ and hence $p$ is large enough then $|\eta_{n,p}| \leq N^{-(2/s+100)}$
and so
$$ \sum_{K\ln N\leq k_1\leq k_2\leq N-n}
\left\|(\phi_N\circ f^{k_1-p})  (\phi_N\circ f^{k_2-p})\right\|_{L_1}\eta_{n,p}=O\left(N^{-98}\right). $$
On the other hand, similarly to \eqref{DBound2} we get
$$ \sum_{K\ln N\leq k_1\leq k_2\leq N-n}
\EXP\left((\phi_N\circ f^{k_1-p})  (\phi_N\circ f^{k_2-p})  \right) \mu(I_n)\leq C\left(N^{2/s} (\ln N)^{{1-D(2-s)}}+1\right) \mu(I_n).  $$

It follows that
$$ \int_{I_n} \left(\sum_{k=K\ln N}^{N-n} \brX_k (z)\right)^2 \rd\mu(z)\leq C \mu(I_n) \left[N^{2/s}
(\ln N)^{1-D(2-s)}+1\right]. $$
Hence
\begin{equation}
\label{ChebLoc}
\mu\left(z\in I_n:\left| \sum_{k=K\ln N}^{N-n} \brX_k(z)\right|> \frac{\eps}{4} N^{1/s} (\ln N)^\alpha\right)
 \leq \frac{16 C \mu(I_n)}{\eps^2} (\ln N)^{1-D(2-s)-2\alpha}.
\end{equation}

Combining \eqref{Buffer} and \eqref{ChebLoc} we obtain \eqref{EqRefl}.
\end{proof}

\subsection{Controlling intermediate values of $X$ for $1\leq s<2$}
\label{SSInt}

Here we prove \eqref{IntVal} in case $1\leq s<2.$

The proof of \eqref{IntVal} for $s\geq1$ will proceed similarly to \S \ref{SSLow}.
Namely we define extremal components similarly to \S \ref{SSLow} by replacing  $\phi \one_{\{\phi< N^{1/s} (\ln N)^{-D}\}}$ with  $\phi \one_{\{N^{1/s} (\ln N)^{-D}\leq\phi< N^{1/s} (\ln N)^\delta\}}$ and
$\brS_n$ with $\brbrS_n.$
We need to obtain the following analogues of
\eqref{EqRefl} and \eqref{NkCheb}
for long extremal components $C_n$:
\begin{equation}
  \label{EqRefl2}
\mu\left(y\in C_n: |\brbrS_N(y)|>\frac{\eps}{2} N^{1/s} (\ln N)^\alpha\right) \geq C \mu(C_n)
\end{equation}
for some $C$ independent of $N,$ $C_n$ and
\begin{equation}
  \label{NkCheb2}
\mu\left(y\in \mathbb{S}: |\brbrS_N(y)|\geq \frac{\eps}{2} N^{1/s} (\ln N)^\alpha\right)\leq \frac{C}{(\ln N)^2}
\end{equation}
for some $C$ independent of $N.$

 In addition, we claim that \eqref{LargeEverywhere} still holds for almost every $y$ in $C_n,$ even though the proof
needs to be modified slightly. Namely it is not true that each summand in
$\brbrS_{K\ln N}(y)$ is smaller than $N^{1/s} (\ln N)^{-D}.$ However, we claim that for almost every $y,$
\bea\label{brbrXLarG}
\sharp\left\{k:\,1\leq k\leq K\ln N,\,\,\brbrX_k(y)\geq N^{1/s}(\ln N)^{-D} \right\}\leq (\ln N)^{(\alpha-\delta)/2}
\eea
for $N$ large enough. If  \eqref{brbrXLarG} is not satisfied, then
\bea\label{TwoHumpsg1}\quad\quad\exists\,\brn,\,\hn\leq K\ln N,\,\,|\brn-\hn|\geq(\ln N)^{(\alpha-\delta)/2},\,\min\left\{\brbrX_\brn,\, \brbrX_\hn\right\}>\breps N^{1/s}(\ln N)^{-D}.\eea The same argument of Lemma~\ref{LmTwoHumps} shows that for almost every $y,$ \eqref{TwoHumpsg1} occurs for finitely many $N.$ Since  the value of each term which is larger than $N^{1/s} (\ln N)^{-D}$ is smaller than $N^{1/s} (\ln N)^\delta,$ we conclude that for almost every $y,$
\begin{equation}
\label{FewLargeTerms}
\left|\brbrS_{K\ln N}(y)\right|\leq C N^{1/s} (\ln N)^{(\delta+\alpha)/2},
\end{equation}
which is sufficient to obtain \eqref{LargeEverywhere}.

Equation \eqref{NkCheb2} follows from the Markov's inequality and the following estimate.

\begin{Lem}
Suppose $x$ is Diophantine. For each positive integer $m,$ we have
\begin{equation}
\label{HT-IntMom}
 E\left(\brbrS_N^m\right)\leq C_m N^{m/s} (\ln N)^{\delta m}.
 \end{equation}
\end{Lem}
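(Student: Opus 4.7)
The plan is to expand
\[
E\bigl(\brbrS_N^m\bigr) \;=\; \sum_{(k_1,\dots,k_m)\in[1,N]^m} E\!\Bigl[\prod_{i=1}^m \brbrX_{k_i}\Bigr]
\]
and to group the $m$-tuples according to a \emph{cluster structure}: two indices belong to the same cluster iff they are joined by a chain of indices with consecutive gaps at most $s(N)=K\ln N$ (for $K$ large). Clusters are pairwise separated by at least $s(N)$, so we may decouple between them via the exponential mixing estimate \eqref{PWM2}, reducing the expectation to an approximate product of per-cluster expectations. This mirrors the cluster analysis already used in Proposition~\ref{AdTag} and Lemma~\ref{LmPEMixBV}, which I would import directly.

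For a multi-index containing a \emph{singleton} cluster $\{k^\ast\}$, I would apply \eqref{PWM2} to decouple $\brbrX_{k^\ast}$ from the remaining factors. Because $E[\brbrX_{k^\ast}]=0$ by the construction of $\brbrb_N$ (this is the place the hypothesis $s\geq 1$ matters), the main term vanishes and only the mixing error survives. Using $\|\brbrX\|_{BV}=O(L)$ and the crude bound $|\prod_{i\neq\ast}\brbrX_{k_i}|\leq (2L)^{m-1}$ with $L=N^{1/s}(\ln N)^{\delta}$, the error is $O(\theta^{K\ln N}L^{m})=O(N^{-K|\ln\theta|}N^{m/s}(\ln N)^{m\delta})$. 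Summing over the at most $N^{m}$ choices of indices and taking $K$ large, the total contribution from singleton-containing configurations is negligible.

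The main contribution comes from configurations where every cluster has size $\geq 2$, so there are at most $m/2$ clusters. For a cluster $\cC$ of size $c\geq 2$, I would expand $\prod_{i\in\cC}(Y_{k_i}-\brbrb_N)$ with $Y_k=X_k\,\one_{\{M\leq X_k<L\}}$, $M=N^{1/s}(\ln N)^{-D}$, and bound each subproduct $E[\prod_{i\in T}Y_{k_i}]$ using the Diophantine input from Proposition~\ref{AdTag}(b). Since all indices of $\cC$ lie within a window of length $s(N)$ we have $\Sep=1<|T|$, so $(M2)_{|T|}$ yields $\mu(\bigcap_{i\in T}\{\phi\circ f^{k_i}>M\})\leq K\,\mu(\phi>M)/(\ln N)^{100|T|}$. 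Combining with $|\brbrX_{k_i}|\leq 2L$ and $|\brbrb_N|\leq CL\,\mu(\phi>M)$ gives a per-cluster bound of order $L^{c}\,\mu(\phi>M)/(\ln N)^{100c}$. Using $\mu(\phi>M)\sim a_s M^{-s}=O(N^{-1}(\ln N)^{Ds})$ and counting placements (at most $N^{r}s(N)^{m-r}$ for $r$ clusters of total size $m$), the total bound becomes
\[
C_m\,L^{m}\,(K\ln N)^{m-r}\,(N\mu(\phi>M))^{r}\,(\ln N)^{-100m}
\;\leq\;C_m\,L^{m}\,(\ln N)^{(m-r)+Dsr-100m},
\]
whose polylog exponent is $\leq 0$ for fixed $D$ and $m\geq 2$, yielding \eqref{HT-IntMom}.

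The step I expect to be the main obstacle is the careful handling of the centering when expanding $\prod(Y_{k_i}-\brbrb_N)$: for $s>1$ the constant $\brbrb_N$ has a non-trivial size (of order $N^{1/s-1}(\ln N)^{D(s-1)}$), so every subset $T\subsetneq\cC$ in the multinomial expansion must be bounded separately via the appropriate instance of $(M2)_{|T|}$, and one must check that the resulting terms all collapse into the single dominant estimate above. A secondary subtlety is propagating the mixing decoupling cluster-by-cluster while keeping the $BV$ and $L^{1}$ norms of the intermediate products under control; here I would exploit the asymmetric form of \eqref{PWM2} (BV on one factor, $L^{1}$ on the other) rather than the symmetric estimate of Lemma~\ref{LmPEMixBV}(ii), so that compositions $\brbrX\circ f^{k}$ only appear under the $L^{1}$ norm where they are harmless.
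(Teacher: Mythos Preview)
Your cluster decomposition is natural, but the quantitative estimate you extract from $(M2)_r$ is not strong enough to close. The factor $(N\mu(\phi>M))^r\asymp (\ln N)^{Dsr}$ must be killed by the $(\ln N)^{-100m}$ you pull from $(M2)_r$, and for this you need $Ds\cdot r\leq 100m$ (roughly $Ds\leq 198$ in the worst case $r=m/2$). But $D$ is chosen only to satisfy \eqref{D-LargeS}, which forces $D\to\infty$ as $s\uparrow 2$; the constant $100$ in $(M2)_r$ is fixed and cannot absorb this. The underlying Diophantine estimate in Proposition~\ref{AdTag}(b) actually gives \emph{polynomial} decay $N^{-\eta(r-m)}$, which would suffice, but that is not what $(M2)_r$ records, so citing it as a black box leaves a gap. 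A second problem is that $(M2)_r$ is stated for \emph{distinct} ordered indices $k_1<\dots<k_r$; when your cluster consists of a single index repeated $c\geq 2$ times (i.e.\ $k_{i_1}=\dots=k_{i_c}=n$), there is nothing for $(M2)$ to act on, and your claimed per-cluster bound $L^c\mu(\phi>M)/(\ln N)^{100c}$ is simply false---the true size is $E[\brbrX_n^c]\asymp N^{c/s-1}(\ln N)^{(c-s)\delta}$. Finally, for $s=1$ one has $\brbrb_N=0$ by the paper's convention, so $E[\brbrX_{k^\ast}]\neq 0$ and singleton clusters do not vanish (though they contribute only $(\ln\ln N)^m$, which is harmless).

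The paper sidesteps all of this by organising the expansion by \emph{distinct} indices $n_1<\dots<n_k$ with multiplicities $m_1+\dots+m_k=m$ and using the one-variable moment bounds \eqref{MomentAsym}: $E|\brbrX|^{l}\leq CN^{l/s-1}(\ln N)^{(l-s)\delta}$ for $l\geq 2$. This supplies an $N^{-1}$ saving \emph{per distinct index}, which exactly balances the $N^k$ placements---no polylog bookkeeping against $D$ is needed. The Diophantine hypothesis enters only for the subleading non-separated case, where it forces all gaps to be at least $\epsilon\ln N$ and then Lemma~\ref{LmPEMixBV}(i) gives a genuine power of $N$ of decay. The fix for your approach is therefore to replace the crude bound $|Y|\leq L$ plus $(M2)_r$ by the moment asymptotics \eqref{MomentAsym} for repeated indices, and to invoke the raw Diophantine/mixing estimate (not the packaged $(M2)_r$) for genuinely close distinct indices.
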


\begin{proof}
Recall that we take
$$ \bar{\bar{X}}_n= X_n\one_{ \{ N^{1/s} (\ln N)^{-D} \leq X_n < N^{1/s}(\ln N)^\delta \} }-\bar{\bar{b}}_N,\quad
\bar{\bar{b}}_N=E\left(\phi\one_{ \{ N^{1/s} (\ln N)^{-D} \leq \phi < N^{1/s}(\ln N)^\delta \}}\right)$$
for $1<s<2$ and
$$ \bar{\bar{X}}_n= X_n\one_{ \{ N^{1/s} (\ln N)^{-D} \leq X_n < N^{1/s}(\ln N)^\delta \} }$$
for $s=1.$

Thus
\begin{equation}
\label{SumPowers}
E\left(\brbrS_N^m\right)=\sum_{k=1}^m\sum_{1\leq n_1<\dots<n_k\leq N}\sum_{m_1+\dots+m_k=m} c_{m_1,\dots, m_k}
 E(\bar{\bar{X}}_{n_1}^{m_1}\dots \bar{\bar{X}}_{n_k}^{m_k}),
\end{equation}
where $c_{m_1,\dots, m_r}$ are combinatorial factors.

Note that by \eqref{Tail},
\begin{equation}
\label{MomentAsym}
\EXP \left|\bar{\bar{X}}_0\right|^l
\leq\left\{
\begin{array}{ll}
C N^{l/s-1}(\ln N)^{(s-l)D}, & l<s,\\
C N^{l/s-1}(\ln N)^{(l-s)\delta}, & l>s,\\
C\ln\ln N, & l=s,\\
\end{array}
\right.
\end{equation}
and
$$\left\|\bar{\bar{X}}^l_0\right\|_{BV} \leq C N^{l/s } (\ln N)^{l\delta}.$$

(i) Suppose that $\Sep_N(n_1, \dots, n_k)=k.$ In this case, we apply the multiple mixing bound from
Lemma~\ref{LmPEMixBV}(ii).
We choose $K$ in the definition of separation
condition large enough so that the error term in Lemma~\ref{LmPEMixBV}(ii) is $O(N^{-k}).$
This leads to two further subcases.

 (a) $\DS \min_j m_j>1.$
 Using the second estimate in \eqref{MomentAsym}, we find that
 $$ \prod_j \EXP\left(\bar{\bar{X}}_{n_j}^{m_j}\right)\leq C N^{m/s-k} (\ln N)^{m \delta}, $$
 while the error term is much smaller.
 Summing over all $n_1, \dots, n_k,$ we see that this case gives the leading contribution of
 $C N^{m/s} (\ln N)^{m \delta}.$

(b) $\DS \min_j m_j=1.$ If $1<s<2,$ then $\DS \prod_j \EXP\left(\bar{\bar{X}}_{n_j}^{m_j}\right)=0,$
so each term is $O(N^{-k}),$ and the total contribution of all terms is $O(1).$

If $s=1,$ we have $$\prod_j \EXP\left(\bar{\bar{X}}_{n_j}^{m_j}\right)=\left(\prod_{m_j\geq2} \EXP\left(\bar{\bar{X}}_{n_j}^{m_j}\right)\right)\left(\prod_{m_j=1} \EXP\bar{\bar{X}}_{n_j}\right)$$
This leads to
\Bea\prod_j \EXP\left(\bar{\bar{X}}_{n_j}^{m_j}\right)\leq C\left(\prod_{m_j\geq2}N^{m_j-1}\left(\ln N\right)^{(m_j-1)\delta}\right)\left(\ln\ln N\right)^{m-k}
 \leq C N^{m-k}\left(\ln N\right)^{(m-k)\delta}\left(\ln\ln N\right)^m
\Eea
for $k<m$ and
$$\prod_j \EXP\left(\bar{\bar{X}}_{n_j}^{m_j}\right)=\prod_j \EXP\bar{\bar{X}}_{n_j}
 \leq C\left(\ln\ln N\right)^m
$$
for $k=m.$ It follows that the total contribution of all terms is bounded by  $C N^{m} (\ln N)^{m \delta}$ when  $\DS \min_j m_j=1$ and $s=1.$

(ii) Suppose that $\Sep_N(n_1, \dots, n_k)=\bar{k}<k.$  Given that $x$ is Diophantine,  we consider the case where
$$\min_{j}\{n_{j+1}-n_j\}\geq\epsilon\ln N$$ for some $\epsilon>0.$ Otherwise, the contribution to the sum is $0.$
By \eqref{MomentAsym},
$$\EXP \left|\bar{\bar{X}}_{n_j}\right|^{m_j}\leq C N^{m_j/s-1}(\ln N)^{m_jD}.$$
Additionally, using Lemma~\ref{LmPEMixBV}(i), we find that $$E(\bar{\bar{X}}_{n_1}^{m_1}\dots \bar{\bar{X}}_{n_k}^{m_k}) =O\left(N^{(m/s)-\bar k-\eta} (\ln N)^{Dm}\right),$$
where $\eta=(k-\bar{k})\epsilon |\ln \theta|.$ Since the number of such terms in \eqref{SumPowers} is
 $O\left(N^{\bar{k}} (\ln N)^{k-\bar k}\right),$ we conclude that the contribution of these non-separated terms
 is $\DS O\left(N^{m/s-\eta}(\ln N)^{(D+1)m}\right),$ which is much smaller than
 the  right-hand side of \eqref{HT-IntMom}.
\end{proof}

The proof of \eqref{EqRefl2} is similar to  the proof of \eqref{EqRefl}. Namely,
as in \S\ref{SSLow}, it suffices to establish the following analogue of \eqref{ReflTCh}:
\begin{equation}
  \label{ReflTCh2}
\mu\left(z\in I_n: |\brbrS_{N-n}(z)|>   \frac{\eps}{2} N^{1/s} (\ln N)^\alpha\right)
  \leq C\mu(I_n),
\end{equation}
for some constant $C>0.$
We split $\brbrS_{N-n}=\brbrS_{K\ln N}+\left(\brbrS_{N-n}-\brbrS_{K\ln N}\right).$
The first term is estimated using \eqref{FewLargeTerms}.
For the second term, we proceed as in the proof of \eqref{ReflTCh},  but replace \eqref{DBound2} with
\eqref{HT-IntMom}. This yields
$$
\EXP\left(\left(\brbrS_{N-n}-\brbrS_{K\ln N}\right)^{m}\one_{I_n}\right)
\leq C N^{m/s} (\ln N)^{\delta m}\mu(I_n).
$$
Therefore, we have
\begin{equation}
\label{ChebLocInter2}
\mu\left(z\in I_n: \left|\brbrS_{N-n}(z)-\brbrS_{K\ln N}(z)\right|> \frac{\eps}{4} N^{1/s} (\ln N)^\alpha\right)
\leq \frac{16 C \mu(I_n)}{\eps^2} (\ln N)^{-m(\alpha-\delta)}.
 \end{equation}

Finally, combining \eqref{FewLargeTerms} with \eqref{ChebLocInter2} yields \eqref{ReflTCh2}.

\appendix

\section{Exponential  mixing of all orders for expanding maps}
\label{AppExpMix}
\begin{proof}[Proof of Lemma~\ref{LmPEMixBV}]
We prove part (i) by induction. When $q=2$ the statement reduces to \eqref{PWM2}. For $q>2$
we apply \eqref{PWM2} with $\psi_1=\varphi_1,$  $\psi_2=\prod_{j=2}^{q} (\varphi_j\circ f^{k_j-k_2}),$
and obtain
$$ \left\|\prod_{j=1}^q\left(\varphi_j\circ f^{k_j}\right)\right\|_{L^1}= \|\varphi_1\|_{L^1}\|\psi_2\|_{L^1} +O\left(\theta^{k_2-k_1}\|\varphi_1\|_{BV} \|\psi_2\|_{L^1} \right),$$
By the induction hypothesis, we have
$$\|\psi_2\|_{L^1}\leq
\left(\prod_{j=2}^{q-1}
\left(||\varphi_j||_{L^1}+\theta^{k_j-k_{j-1}} \|\varphi_j\|_{BV}\right)\right)\|\varphi_q\|_{L^1}.
$$
It follows that
$$\left\|\prod_{j=1}^q\left(\varphi_j\circ f^{k_j}\right)\right\|_{L^1}\leq
\left(\prod_{j=1}^{q-1} \left(\|\varphi_j\|_{L^1}+C\|\varphi_j\|_{BV}\theta^{k_{j+1}-k_j} \right)\right)\|\varphi_q\|_{L^1}.$$
Proceeding in a similar way, we can also prove that
$$\left\|\prod_{j=1}^q\left(\varphi_j\circ f^{k_j}\right)\right\|_{L^1}\geq
\left(\prod_{j=1}^{q-1} \left(\|\varphi_j\|_{L^1}-C\|\varphi_j\|_{BV}\theta^{k_{j+1}-k_j} \right)\right)\|\varphi_q\|_{L^1}.$$

Now we turn to the proof of part (ii). When  $q=2,$ the statement reduces to \eqref{PWM2}.
Applying \eqref{PWM2} with
$\psi_1=\varphi_1$ and $\DS \psi_2=\prod_{j=2}^{q} (\varphi_j\circ f^{k_j-k_2}),$ we obtain
\begin{eqnarray*}
&&\left|\EXP\left(\prod_{j=1}^q (\varphi_j  \circ f^{k_j })\right)-\left(\EXP\varphi_1\right)
\EXP\left(\prod_{j=2}^q (\varphi_j  \circ f^{k_j })\right)\right|\\
 &\leq& C \theta^k \|\varphi_1\|_{BV} \left\|\left(\prod_{j=2}^q (\varphi_j  \circ f^{k_j })\right)\right\|_{L_1}
\leq C \theta^k \|\varphi_1\|_{BV} \left\|\left(\prod_{j=2}^q (|\varphi_j|  \circ f^{k_j })\right)\right\|_{L_1}
\leq C\theta^k \prod_{j=1}^q\|\varphi_j\|_{BV},\\
\end{eqnarray*}
where the last inequality follows from  part (i) of the lemma, which we have already proven.
Applying the inductive estimate to
$ \EXP\left(\prod_{j=2}^q (\varphi_j  \circ f^{k_j })\right)$
we obtain \eqref{GPWMr}.
\end{proof}

\section{Good targets for expanding maps}
\label{GTEM}
\begin{proof}[Proof of Proposition~\ref{AdTag}]
We use the notation  $\fA_{\rho_N}$  and $\fA_{\rho_N,j}$ as defined in \eqref{fAro} and \eqref{fAroj}, respectively.

We define the annular set
$\DS B(x,r_1,r_2)=\{y\in\mathbb{S}:r_1\leq|y-x|\leq r_2\},$ and let $\tilde{A}^+_{\rho_N,j},$ $\tilde{A}^-_{\rho_N,j}$  be the characteristic functions of the sets
$$B\left(x,\left(\frac{a}{\rho_N(c_j+\varepsilon)+L}\right)^s,\left(\frac{a}{\rho_N(c_j-\varepsilon)-L}\right)^s\right)
$$
and
$$
B\left(x,\left(\frac{a}{\rho_N(c_j+\varepsilon)-L}\right)^s,\left(\frac{a}{\rho_N(c_j-\varepsilon)+L}\right)^s\right),$$
respectively, where $\DS L=\max_{x\in\mathbb{S}}\left|\psi(x)\right|.$ Then
$$\tilde{A}^-_{\rho_N,j}\leq \one_{\fA_{\rho_N,j}}\leq\tilde{A}^+_{\rho_N,j}, \quad
\EXP\tilde{A}^+_{\rho_N,j}-\EXP\tilde{A}^-_{\rho_N,j}\leq C\rho_N^{-(s+1)}\text{ and }\left\|\tilde{A}^\pm_{\rho_N,j}\right\|_{BV}\leq 2\mu(\fA_{\rho_N,j})+4.$$

We now verify the conditions $(M1)_r,$ $(M2)_r,$ $(M3)_r,$ $(M4)_*$ and $(M4)_{I_1,\dots, I_r; i_1, \dots, i_r}$ using Lemma~\ref{LmPEMixBV}.

(a) We prove $(M1)_r$ and $(M3)_r$ for $\fA_{\rho_N,j},$  and the proof for $\fA_{\rho_N}$ follows in the same approach. Suppose $0<k_1<k_2<\dots<k_r\leq N$ are such that $k_{i+1}-k_i\geq K\ln N,$ where $K$ is a sufficiently large constant, and each $(i_1,\dots, i_r)\in \{1,\dots, p\}^r.$ Then we have
\Bea
&&\mu\left(\bigcap_{j=1}^r \fA^{k_j}_{\fr_N, i_j} \right)\leq
\EXP\left(\prod_{j=1}^r \tilde{A}^+_{\rho_N,i_j}\circ f^{k_j} \right)\\
&\leq&\left(\prod_{j=1}^{r-1} \left(\EXP\tilde{A}^+_{\rho_N,i_j}+C\theta^{K\ln N}\left(2\EXP\tilde{A}^+_{\rho_N,i_j}+4\right)\right)\right)\EXP\tilde{A}^+_{\rho_N,i_r}
\leq\left(\prod_{j=1}^r \sigma_{\rho_N, i_j}\right)(1+\eps_N)
\Eea
where $\eps_N\to 0$ as $N\to\infty.$ This establishes the right-hand side of $(M1)_r.$   The proof for the left-hand side follows similarly, replacing $\tilde{A}^+_{\rho_N,i_j}$ by $\tilde{A}^-_{\rho_N,i_j}$.

Next we prove $(M3)_r.$  Suppose $ k_1<\dots<k_r<l_1<\dots<l_r$ are such that  $2^i<k_\alpha\leq 2^{i+1}$ and $2^j<l_\beta\leq 2^{j+1}$ for $1\leq\alpha,\beta\leq r.$ For each pairs $(i_1, i_2, \dots, i_r)$ and $(j_1, j_2\dots j_r)$ in $\{1,\dots, p\}^r,$ we have
\Bea
&&\mu\left(\left(\bigcap _{\alpha=1}^r \fA^{k_\alpha}_{\fr_{2^i},i_\alpha}\right)\bigcap
\left(\bigcap_{\beta=1}^r \fA^{l_\beta}_{\fr_{2^j},j_\beta} \right)\right)
\leq \EXP\left(\left(\prod_{\alpha=1}^r \tilde{A}^+_{\rho_{2^i},i_\alpha}\circ f^{k_\alpha} \right)\left(\prod_{\beta=1}^r \tilde{A}^+_{\rho_{2^j},j_\beta}\circ f^{k_\beta} \right)\right)\\
&\leq&
\left(\prod_{\alpha=1}^r \left(\EXP\tilde{A}^+_{\rho_{2^i},i_\alpha}+C\theta^{Ki}\left(2\EXP\tilde{A}^+_{\rho_{2^i},i_\alpha}+4\right)\right)\right)
\left(\prod_{\beta=1}^{r-1} \left(\EXP\tilde{A}^+_{\rho_{2^j},j_\beta}
+C\theta^{Kj}\left(2\EXP\tilde{A}^+_{\rho_{2^j},j_\beta}+4\right)\right)\right)\EXP\tilde{A}^+_{\rho_{2^j},j_r}\\
&\leq&\left(\prod_{\alpha=1}^r \sigma_{\rho_{2^i}, i_\alpha}\right)\left(\prod_{\beta=1}^r \sigma_{\rho_{2^j}, j_\beta}\right)(1+\eps_i),
\Eea
which proves $(M3)_r.$

(b) We prove $(M2)_r$ under the assumption that $x$ is Diophantine. Let $\Sep_N(k_1, \dots k_r)=m<r.$ Since $x$ is Diophantine, there exists $\epsilon>0$ such that
$$B\left(x,c\rho_N^{-s}\right)\cap f^{-k}B\left(x,c\rho_N^{-s}\right)=\emptyset$$
for positive integer $k\leq \epsilon\ln N,$ where $c=2a^s/c_j^s.$
Therefore, we assume that $k_{j+1}-k_j \geq \epsilon \ln N$ for all $j.$
Then we obtain
\Bea
 \mu\left(\bigcap_{j=1}^r \fA^{k_j}_{\fr_N} \right)
&\leq& C\left(\prod_{k_{j+1}-k_j>K\ln N}  \sigma_{\rho_N}\right)(\theta^{\epsilon\ln N})^{r-m}\leq{\frac{C \fv_{\fr_N}^m}{(\ln N)^{100r}}}.\\
\Eea

(c) Recall that
\begin{eqnarray*}
A_{*}^N&=&\left\{\omega\in\Omega:\,\exists\, 0<k_1<k_2<\dots <k_r\leq N\,\,\text{such that}\,\,\Sep_N(k_1, \dots, k_r)=r\,\,\text{and}\,\,\omega\in\fA^{k_j}_{\fr_N}\right\}\\
&=&\bigcup_{\overset{0<k_1<\dots <k_r\leq N}{ { \Sep_N}(k_1,\dots, k_r)=r}}\left(\bigcap_{j=1}^r\fA^{k_j}_{\fr_N}\right).
\end{eqnarray*}
We define $$\tau_{*}^N=\sum_{\overset{0<k_1<\dots <k_r\leq N}
{ { \Sep_N}(k_1,\dots, k_r)=r}}\mu\left(\bigcap_{j=1}^r \fA^{k_j}_{\fr_N} \right).$$
Using $(M1)_r$ which has been established, we obtain
$$\lim_{N\rightarrow\infty}\frac{\tau_{*}^N}{\left(N\sigma_{\rho_N}\right)^r}=1.$$
In addition, since $\mu(A_{*}^N)\leq\tau_{*}^N,$  we have
$$\limsup_{N\rightarrow\infty}\frac{\mu(A_{*}^N)}{\left(N\sigma_{\rho_N}\right)^r}\leq 1.$$

(d) We first assume that the point $x$ is not periodic and define
\bea\label{btau}\bar\tau_{*}^N=\sum_{k_j/N\in I_j}\mu\left(\bigcap_{j=1}^r \fA^{k_j}_{\fr_N, i_j} \right).\eea
Recall that
\Bea
A_{I_1,\dots, I_r; i_1, \dots i_r}^N&=&\left\{\omega\in\Omega:\,\exists\, k_j\,\,\text{such that}\,\,k_j/N\in  I_j\,\,\text{and }\,\,\omega\in\fA^{k_j}_{\fr_N, i_j} \right\}\\
&=&\bigcup_{k_j/N\in I_j}\left(\bigcap_{j=1}^N\fA^{k_j}_{\fr_N, i_j}\right).
\Eea

Using $(M1)_r$ which has been proved, we obtain $$\lim_{N\rightarrow\infty}\frac{\bar\tau_{*}^N}{N^r\Pi_{j=1}^r\left(\sigma_{\rho_N,i_j}|I_j|\right)}=1.$$
Therefore, it suffices to show that
$$\lim_{N\rightarrow\infty}\frac{\mu\left(A_{I_1,\dots, I_r; i_1, \dots i_r}^N\right)}{\bar\tau_{*}^N}=1.$$

By Boole's inequality, we have
$$\mu\left(A_{I_1,\dots, I_r; i_1, \dots i_r}^N\right)\leq\bar\tau_{*}^N$$
and
\begin{equation}
\label{M4Lower}
 \mu\left(A_{I_1,\dots, I_r; i_1, \dots i_r}\right)\geq\bar\tau_{*}^N-\sum_{t=r+1}^{2r}\bar\tau_t^N,
\end{equation}
where $$\bar\tau_t^N=\sum_{\overset{0<k_1<\dots <k_r\leq N}
{ { \Sep_N}(k_1,\dots, k_r)=r}}\mu\left(\fA^{k_1}_{\fr_N}\cap \dots \cap \fA^{k_r}_{\fr_N} \cap
\fA^{k_{r+1}}_{\fr_N}\cap\cdots\cap\fA^{k_t}_{\fr_N}\right),$$
because  $\DS \fA_{\rho_N,i_j}\subset\fA_{\rho_N}=\{y:\phi(y)>C\rho_N\}$
for all $i_j,$ $1\leq j\leq r,$ and some $C>0.$

We prove that
$$\frac{\bar\tau_{r+1}^N}{\bar\tau_{*}^N}\to 0\,\,\,\text{as}\,\,\,N\to\infty.$$
The proof of $\bar\tau_{t}^N/\bar\tau_*^N\to 0$ for  other values of $t$ follows similarly.  We consider several cases depending on the size of
$\DS p=\min_{j=1,\dots, r} |k_{r+1}-k_j|$.

(i) Since $x$ is non-periodic, for each fixed $p_0,$ we have
$\DS \fA_{\fr_N} \mathrel{\scalebox{1}{$\bigcap$}} f^{-p} \fA_{\fr_N} =\emptyset$ for all $1\leq p< p_0,$ provided that  $N$ is large enough. Therefore, the contribution from terms with $p< p_0$ is zero.

(ii) Next, suppose that $p_0\leq p\leq \varepsilon\ln N.$ Specifically, we may assume that $k_{r+1}=k_\brj+p$ for some $\brj\leq r$ (the case where $k_{r+1}=k_\brj-p$ is analogous). Define $\bar{A}_{\fr_N}^+$ as the characteristic function of the set
$$B\left(x,\left(\frac{a}{C\rho_N-L}\right)^s\right).$$ Then we have
$$\bar{A}^+_{\rho_N}\geq \one_{\fA_{\rho_N}},\,
\EXP\bar{A}^+_{\rho_N}-\mu(\fA_{\rho_N})\leq C\rho_N^{-(s+1)}\text{ and }\left\|\bar{A}^+_{\rho_N}\right\|_{BV}\leq 2\mu(\fA_{\rho_N})+4.$$ For $j=1, \dots, r,$ define
$$\hA_j=\begin{cases} \bar{A}_{\fr_N}^+, & \text{if } j\neq \brj; \\  \bar{A}_{\fr_N}^+ \left(\bar{A}_{\fr_N}^+ \circ f^p\right), &
\text{if } j=\brj. \end{cases} $$
Let $\DS \Lambda=\max_{x\in\mathbb{S}}|f'(x)|.$ Then, we have
\Bea &&\mu\left(\prod_{j=1}^{r+1} \fA_{\fr_N}^{k_j} \right) \leq \EXP\left(\prod_{j=1}^r \hA_j \circ f^{k_j}\right)\\
&\leq&\left(\prod_{j\neq\brj} \left(\EXP\hA_j +C\left\|\bar{A}_{\fr_N}^+\right\|_{BV}\theta^{K\ln N}\right)\right) \left(\EXP\hA_\brj +C\left\|\bar{A}_{\fr_N}^+\right\|_{BV}^2\Lambda^p\theta^{K\ln N}\right)\leq \prod_{j=1}^r \left(\EXP\hA_j+N^{-10}\right)
\Eea
if $s(n)=K\ln N$ and $K$ is large enough.
Noting that
$\DS
\EXP\hA_\brj=\EXP\left(\bar{A}_{\fr_N}^+ (\bar{A}_{\fr_N}^+ \circ f^p)\right)\leq C \sigma_{\fr_N} \theta^p
$
by \eqref{PWM2}, we conclude that each term for $p$ as in the case (ii) is at most $C\sigma_{\fr_N}^r \theta^p.$
Summing over all $r$ tuples and over all $p\in [p_0, \varepsilon\ln N],$ we obtain a total contribution of $C N^r \sigma_{\rho_N}^r \theta^{p_0}$ for some constant $C$ independent of $p_0.$

(iii) The terms where $\eps\ln N< p\leq K \ln N$ contribute $O(\sigma_{\rho_N}^{r+ \eta})$ for some $\eta=\eta(\eps)>0.$ Therefore, the total contribution of those terms is $O\left(N^r (\ln N) \; \sigma_{\rho_N}^{r+\eta}\right).$

(iv) The terms where $p>K\ln N$ for sufficiently large $K$ contribute $O(\sigma_{\rho_N}^{r+1}),$ similar to the proof
of $(M1)_r.$ Hence, the total contribution of these terms is $O\left((N \sigma_{\rho_N})^{r+1}\right).$

Summing the estimates from cases (i)--(iv), we observe that $\bar\tau_{r+1}^N$ is negligible in comparison to $\tau_{*}^N.$ This completes the proof of $(M4)_{I_1,\dots, I_r; i_1, \dots, i_r}$ for non-periodic points.

Next, we consider the case where the point $x$ is periodic. Suppose the period of $x$ is $q,$ i.e., $f^q(x)=x.$ We use the notation $\tilde\fA_{\rho_N,j}$ as defined in~\eqref{tlfAroj} and define
$$\tilde{A}_{I_1,\dots, I_r; i_1, \dots i_r}^N=\left\{\omega\in\Omega:\,\exists\, k_j\,\,\text{such that}\,\,k_j/N\in  I_j\,\,\text{and }\,\,\omega\in\tilde\fA^{k_j}_{\fr_N, i_j} \right\}.$$
We now proceed to prove $(M4)_{I_1, \dots I_r; i_1, \dots, i_r}$ for $\tilde{A}_{I_1,\dots, I_r; i_1, \dots i_r}^N.$

Observe that $$f^{-p_0}\fA_{\fr_N, j} = \left(f^{-(p_0 - q)}\fA_{\fr_N, j} \mathrel{\scalebox{1}{$\bigcap$}} f^{-p_0}\fA_{\fr_N, j}\right) \mathrel{\scalebox{1}{$\bigcup$}} \tilde\fA_{\rho_N,j}.$$
Indeed if $f^k y \in \fA_{\fr_N, j},$  $f^n y \in \fA_{\fr_N, j}$ can only occur if $n=k + iq$ for  $i \in \mathbb{Z}.$ Therefore, $(p_0-k)$ must be a multiple of $q$. In this case, $f^{p_0} y \in \fA_{\fr_N, j}$ implies $f^{p_0 - q} y \in \fA_{\fr_N, j}$, since $f$ is expanding.

In addition, we have
\bea\label{nint}f^{-(p_0-q)}\fA_{\fr_N, j}\mathrel{\scalebox{1}{$\bigcap$}}f^{-p_0}\fA_{\fr_N, j}=\emptyset\eea
whenever
$0<\eps<\left(\gamma^{q/s}-1\right)c_j/\left(\gamma^{q/s}+1\right),$ where $\eps$ is from the definition of
$$\fA_{\rho_N,j}=\left\{y:\frac{\phi(y)}{\rho_N} \in [c_j-\eps, c_j+\eps]\right\},$$
and $\DS \gamma=\min_{x\in\mathbb{S}}|f'(x)|>1.$ If $f^{p_0-q}(y)\in \fA_{\fr_N, j},$ we have
$$\left|f^{p_0}(y)-x\right|\geq\gamma^q\left|f^{p_0-q}(y)-x\right|\geq\gamma^q\left(\frac{a}{\rho_N(c_j+\varepsilon)+L}\right)^s
>\left(\frac{a}{\rho_N(c_j-\varepsilon)-L}\right)^s$$
where $\DS L=\max_{x\in\mathbb{S}}\left|\psi(x)\right|.$
Therefore, we conclude that $f^{p_0}(y)\notin \fA_{\fr_N, j},$ and \eqref{nint} follow.

\noindent
Hence
$\DS \tilde\fA_{\rho_N,j}=f^{-p_0}\fA_{\fr_N, j}$ for $0<\eps<\frac{(\gamma^{q/s}-1)c_j}{\gamma^{q/s}+1}$
and $\DS \mu\left(\tilde\fA_{\rho_N,j}\right)=\mu\left(f^{-p_0}\fA_{\fr_N, j}\right)=\sigma_{\rho_N,j}.$

We will show that $$\lim_{N\rightarrow\infty}\frac{\mu\left(\tilde{A}_{I_1,\dots, I_r; i_1, \dots i_r}^N\right)}{N^r\Pi_{j=1}^r\left(\sigma_{\rho_N,i_j}|I_j|\right)}=1.$$

Note that
$$\sum_{k_j/N\in I_j}\mu\left(\bigcap_{j=1}^r \tilde\fA^{k_j}_{\fr_N, i_j} \right)=\sum_{k_j/N\in I_j}\mu\left(\bigcap_{j=1}^r \fA^{k_j}_{\fr_N, i_j} \right)=\bar\tau_{*}^N,$$
which coincides with the non-periodic case. We prove $(M4)_{I_1, \dots, I_r; i_1, \dots, i_r}$ using the inequality:
\begin{equation}
\label{M4*Lower}
 \mu\left(\tilde{A}_{I_1,\dots, I_r; i_1, \dots i_r}\right)\geq\bar\tau_{*}^N-\sum_{t=r+1}^{2r}\tilde\tau_t^N,
\end{equation}
where $$\tilde\tau_t^N=\sum_{k_j/N\in I_j}\mu\left(\tilde\fA^{k_1}_{\fr_N,i_1}\cap \dots \cap \tilde\fA^{k_r}_{\fr_N,i_r} \cap
\tilde\fA^{k_{r+1}}_{\fr_N,i_{r+1}}\cap\cdots\cap\tilde\fA^{k_t}_{\fr_N,i_t}\right).$$

We now show that
$\DS \frac{\tilde\tau_{r+1}^N}{\bar\tau_{*}^N}\to 0\,\,\,\text{as}\,\,\,N\to\infty.$
The proof  for  other values of $t$ follows similarly.  We consider several cases depending on the value of
$\DS p=\min_{j=1,\dots, r} |k_{r+1}-k_j|$.

(1) If $p<p_0$ where $p_0$ is specified in the definition of $\tilde\fA_{\fr_N,j},$ we take $j_0$ such that  $k_{r+1}=k_{j_0}+p$~(the case where $k_{r+1}=k_{j_0}-p$ is similar). Since $d(I_i,I_j)>\eps$ for $i\neq j,$
both $k_{r+1}/N$ and $k_{j_0}/N$ belong to the same component $I_j,$ thus $i_{r+1}=i_{j_0}.$ We then obtain
$$
\tilde\fA^{k_{j_0}}_{\fr_N,i_{j_0}}\cap\tilde\fA^{k_{r+1}}_{\fr_N,i_{r+1}}\subset f^{-k_{j_0}}\left(
f^{-p_0}\fA_{\fr_N,i_{j_0}}\mathrel{\scalebox{1}{$\bigcap$}}\left(f^{-p}\fA_{\fr_N,i_{j_0}}^c\cap\cdots\cap
f^{-p-p_0+1}\fA_{\fr_N,i_{j_0}}^c\right)\right)=\emptyset.
$$

(2) If $p\geq p_0,$ we take $C>0$ such that
$$\fA_{\rho_N,i_j}\subset\fA_{\rho_N}=\{y:\phi(y)>C\rho_N\}$$ for all $i_j,$ $1\leq j\leq r.$
It follows that $$\tilde\fA_{\fr_N,i_j}\subset f^{-p_0}\fA_{\fr_N,i_j}\subset f^{-p_0}\fA_{\rho_N}$$
and hence
$$\mu\left(\mathrel{\scalebox{1}{$\bigcap$}}_{j=1}^{r+1}\tilde\fA^{k_j}_{\fr_N,i_j}\right)\leq
\mu\left(\mathrel{\scalebox{1}{$\bigcap$}}_{j=1}^{r+1} f^{-p_0}\fA^{k_j}_{\fr_N}\right)
=\mu\left(\mathrel{\scalebox{1}{$\bigcap$}}_{j=1}^{r+1} \fA^{k_j}_{\fr_N}\right).$$
Thus, by the same arguments as (ii)-(iv) in the non-periodic case, we conclude that
$\tilde\tau_{r+1}^N$ is negligible compared to $\bar\tau_{*}^N.$ This completes the proof of $(M4)_{I_1,\dots, I_r; i_1, \dots, i_r}$ for periodic points.
\end{proof}

\begin{proof}[Proof of Lemma \ref{LmTwoHumps}]
Let $$H_{N, \breps}=\left\{y:\exists\,\brn,\,\hn\leq N,\,\,(\ln N)^{(\alpha-\bralpha)/2} \leq  |\brn-\hn|\leq 2 s(N),\,\, \min\{X_\brn, X_\hn\}>\breps N^{1/s}(\ln N)^{\alpha-1}\right\},$$
and
$$\tilde{H}_{k, \breps}=\left\{y:\exists\,\brn,\,\hn\leq 2^{k+1},\,\,k^{(\alpha-\bralpha)/2} \leq  |\brn-\hn|\leq 2 s(2^{k+1}),\,\, \min\{X_\brn, X_\hn\}>\breps 2^{k/s}k^{\alpha-1}\right\}.$$
Since $H_{N, \breps}\subset\tilde{H}_{k, \breps}$ for $2^k\leq N\leq 2^{k+1},$
it suffices to show  that $\tilde{H}_{k, \breps}$ occurs only for finitely many $k.$ Using \eqref{PWM2} and similar method to the proof of Proposition \ref{AdTag} we conclude that
$$ \mu(\tilde{H}_{k, \breps})\leq \frac{C2^{k+1}(k+1) }{2^k k^{s(\alpha-1)} } \theta^{k(\alpha-\bralpha)/2}=
2C k^{s(1-\alpha)+1} 2^{k(\alpha-\bralpha)\ln\theta/2}.$$
Hence $\DS \sum_k \mu(\tilde{H}_{k, \breps})<\infty$ and the result follows from the classical
Borel-Cantelli lemma.
\end{proof}

\begin{proof}[Proof of Lemma \ref{LmCollar}]
(a) Let $\DS\hat{T}_{N,j}=\sum_{n=1}^N \one_{\hB_{N,j}}\circ f^n,$ where  ${\hB_{N,j}}$ in given by \eqref{hBNj}.
Since
$\DS \frac{\brbrS_{N,j}}{N^{1/s} (\ln  N)^{(j+1)\delta}} \leq\hat{T}_{N,j},$
we have
$$
\EXP \left(\left(\frac{\bar{\bar S}_{N,j}}{N^{1/s} (\ln N)^{(j+1)\delta}}\right)^m \right)\leq\EXP\hat{T}_{N,j}^m.
$$
Next, we consider
$$\EXP\left(\binom{\hat{T}_{N,j}}{m}\right)=
\sum_{1\leq k_1<\cdots< k_m\leq N}
\mu\left(\bigcap_{j=1}^m f^{-k_j}\hB_{N,j}\right)
$$and prove that
\bea\label{bnTNj}
\sum_{1\leq k_1<\cdots< k_m\leq N}
\mu\left(\bigcap_{j=1}^m f^{-k_j}\hB_{N,j}\right)\leq C\left(N\mu\left(\hat{B}_{N,j}\right)\right)^m
\eea
for non-periodic $x.$
Since $j\leq0$ and $s<1$, we conclude that \eqref{bnTNj}  is less than
$\DS \frac{C_1}{ (\ln N)^{s j \delta m}}$. For $y\in \hB_{N,j}$ we have
$\phi(y)^m\left(N\mu\left(\hat{B}_{N,j}\right)\right)^m\leq C_2 N^{m/s}(\ln N)^{\delta m},$ because on this set
$$N^{1/s}(\ln N)^{j\delta}<\phi(y)\leq N^{1/s}(\ln N)^{(j+1)\delta}.$$
Thus combining \eqref{bnTNj}  with the identity
$$\EXP \hat{T}_{N,j}^m= \sum_{k=0}^{m} \left( \sum_{j=0}^{k} (-1)^{k-j} \binom{k}{j} k^m \right) \EXP\left(\binom{\hat{T}_{N,j}}{k}\right),$$
we obtain
$$
\EXP\brbrS_{N,j}^m\leq C N^{m/s}(\ln N)^{\delta m}.
$$
We now prove \eqref{bnTNj}. We consider several cases depending on the value of
$\DS p=\!\!\!\min_{1\leq j\le m-1}(k_{j+1}\!-\!k_j).$

(i) Since $x$ is non-periodic,  for each fixed $p_0,$ we have
$$\hB_{N,j} \cap f^{-p} \hB_{N,j} = \emptyset \,\,\,\text{for all}\,\,\, p < p_0$$
whenever  $N$ is sufficiently large because $x$ is non-periodic. Consequently, the contribution of terms corresponding to tuples with $p\leq p_0$ vanishes for large $N.$

(ii) If $p_0\leq p\leq \varepsilon\ln N,$ suppose that $\Sep_N(k_1, \dots, k_m)=l<m.$  Arguing as in case $(M4)_*$(ii) of the proof of  Proposition~\ref{AdTag}, we conclude that each term is smaller than $C \mu \left(\hat{B}_{N,j}\right)^l \theta^p$ for fixed $p.$
Since the total number of terms is $O\left(N^l p^{m-l}\right),$ it follows that the total contribution of the terms with $p=\hp$ is bounded above by
$CN^l \hp^{m-l} \mu\left(\hat{B}_{N,j}\right)^l \theta^\hp. $ Summing over tuples with  $\hp\in [p_0, \eps \ln N],$ we obtain a total contribution of
$$C N^l {p_0}^{m-l} \mu\left(\hat{B}_{N,j}\right)^l \theta^{p_0}$$
for some constant $C$ independent of $N,$ where  $p_0$ is sufficiently large.

(iii) If $\varepsilon\ln N< p\leq K\ln N$ and $\Sep_N(k_1, \dots, k_m)=l,$ then arguing similarly to case $(M4)_*$(iii)  we obtain that
$$ \mu\left(\bigcap_{j=1}^m f^{-k_j}\hB_{N,j}\right)\leq C \mu\left(\hat{B}_{N,j}\right)^{l+\eta}. $$
Summing over all $r$ tuples and over all $p\in (\varepsilon\ln N, K\ln N],$ we conclude that the total contribution is at most $C \left(N\mu\left(\hat{B}_{N,j}\right)\right)^l \mu\left(\hat{B}_{N,j}\right)^\eta\left(\ln N\right)^{m-l}.$

(iv) If $\Sep_N(k_1, \dots, k_m)=m,$ following the argument in case $(M1)_r$ of the proof of  Proposition~\ref{AdTag}, we obtain that
$$\mu\left(\bigcap_{j=1}^m f^{-k_j}\hB_{N,j}\right)\leq C \mu\left(\hat{B}_{N,j}\right)^m.$$

Summing over all tuples $(k_1, \dots, k_m)$ and noting that
$$\left(N\mu\left(\hat{B}_{N,j}\right)\right)^l\leq C \left(N\mu\left(\hat{B}_{N,j}\right)\right)^m\,\,\,\text{for}\,\,\,l < m,$$
we conclude that \eqref{bnTNj} holds, thus completing the proof  of Lemma \ref{LmCollar}(a).

(b) Suppose $x$ is a periodic point with period $q,$ i.e., $f^q(x)=x.$ If there exist $k,\,l\in\mathbb{Z}_+$ such that
$$\left\{f^ky,f^{k+q}y,\ldots,f^{k+(l-1)q}y\right\}\subset\hat{B}_{N,j}\,\,\,\text{and}\,\,\,f^{k+lq}y\notin\hat{B}_{N,j},$$
then for $p=\ln\ln N$ and sufficiently large $N,$ we have
$$f^{k+lq+i}y\notin\hat{B}_{N,j}\,\,\,\text{for all}\,\,\,i\leq p.$$
Furthermore, if there exists $n>k+lq+p$ such that $f^ny\in\hat{B}_{N,j},$ we show that the value $\phi(f^n y)$ constrains the sequence $\phi(f^{n+q} y), \ldots, \phi(f^{n+tq} y),$ provided that $f^{n+iq} y \in \hat{B}_{N,j}$ for all $0\leq i\leq t.$ Indeed, since
$$\left|f^{n+q} y - x\right|=\left|f^{n+q} y - f^qx\right|\geq\gamma^q\left|f^n y - x\right|,$$
where $\DS \gamma=\min_{x\in\mathbb{S}}|f'(x)|,$ we have
\bea\label{Defrg}
\phi(f^{n+q} y)\leq \tilde\gamma\phi(f^n y)
\eea
for some constant $\tilde\gamma$ satisfying $\gamma^{q/s}<\tilde\gamma<1.$ Consequently, we obtain
$\phi(f^{n+iq} y) \leq \tilde\gamma^{i} \phi(f^n y)$ for $1\leq i\leq t,$ which implies
$$\sum_{i=0}^{t} \phi(f^{n+iq} y) \leq \left(\sum_{i=0}^{t} \tilde\gamma^i\right) \phi(f^n y) \leq \mathsf C \phi(f^n y),
$$
where $\DS \mathsf C = \sum_{i=0}^{\infty}\tilde\gamma^i = \frac{1}{1-\tilde\gamma}.$
Now, defining
$$
\tilde{B}_{N,j} = \left\{y: fy \notin \hat{B}_{N,j}, \cdots, f^{p-1}y \notin \hat{B}_{N,j},f^py \in \hat{B}_{N,j}\right\},
$$
we have
$$
\sum_{i=0}^{tq} \phi (f^{n+i}y) \one_{\hat{B}_{N,j}} (f^{n+i}y)= \sum_{i=0}^t \phi (f^{n+iq}y) \leq
\mathsf C \phi (f^ny) = \mathsf C \phi (f^ny) \one_{\tilde{B}_{N,j}} (f^{n-p}y).
$$

Lettting
$\DS \tilde{S}_{N,j}(y) = \sum_{k=1}^{N} \phi (f^{k+p}y) \one_{\tilde{B}_{N,j}} (f^{k} y),$
and summing over the orbit segments that enter $\hat{B}_{N,j},$ we obtain
\bea\label{ConbbS}
\brbrS_{N,j}(y) \leq \mathsf C \tilde{S}_{N,j}(y) + \sum_{i=0}^{k_1} \phi (f^{k_0+iq}y)
\eea
where
$\{f^{k_0+iq}y\}_{i=0}^{k_1}$ represents the initial orbit  segment entering $\hat{B}_{N,j}.$ Noting that
$k_1 \leq \frac{2\delta s}{-\ln \tilde{\gamma}} \ln \ln N,$
where $\tilde{\gamma}$ satisfies \eqref{Defrg}, it follows that
$$\frac{\brbrS_{N,j}}{N^{1/s}(\ln N)^{(j+1)\delta}}
 \leq C \left(\sum_{k=1}^{N} I_{\tilde{B}_{N,j}} (f^k y) + \ln \ln N \right).$$

Define
$$\tilde{T}_{N,j}(y)= \sum_{k=1}^{N} \one_{\tilde{B}_{N,j}} (f^k y),$$
Then
$$
\EXP\left(\left(\frac{\brbrS_{N,j}}{N^{1/s}(\ln N)^{(j+1)\delta}}\right)^m\right)
\leq C^m \EXP\left(\left(\tilde{T}_{N,j} + \ln \ln N\right)^m\right)
=\sum_{r=0}^{m} C^m \binom{m}{r} (\ln \ln N)^{m-r} \EXP\left(\tilde{T}_{N,j}\right)^r.
$$

Next consider the expectation
\begin{align}
\mathbb{E} \left( \binom{\tilde{T}_{N,j}}{r} \right) &= \sum_{1 \leq k_1 < \cdots < k_r \leq N} \mu \left( \bigcap_{j=1}^m f^{-k_j} \tilde{B}_{N,j} \right) 
\leq \sum_{1 \leq k_1 < \cdots < k_r \leq N,\atop k_{i+1} - k_i \geq p} \mu \left( \bigcap_{j=1}^m f^{-k_j} \hat{B}_{N,j} \right), \label{tlTNj}
\end{align}
where we used  that
$$ \bigcap_{j=1}^m f^{-k_j} \hat{B}_{N,j} = \emptyset $$
unless $k_{i+1} - k_i \geq p$ for all $0 \leq i \leq r-1$. Using the same argument as  non-periodic case, the right-hand side of \eqref{tlTNj} is bounded above by
$\DS C \left( N \mu(\hat{B}_{N,j}) \right)^r. $

Now arguing as in the non-periodic case we obtain
$$ \mathbb{E}\brbrS_{N,j}^m\leq C N^{m/s} (\ln N)^{\delta m} (\ln \ln N)^m, $$
completing the proof for the periodic  $x$.
\end{proof}


\begin{thebibliography}{999}

\bibitem{AN} Aaronson, J., Nakada, H.: Trimmed sums for non-negative, mixing stationary processes. {\it Stoch. Process. Appl.} {\bf 104} (2003), no. 2, 173--192.


\bibitem{BarreiraSaussol01} Barreira, L., Saussol, B.: Hausdorff dimension of measures via Poincar\'{e} recurrence. {\it Comm. Math. Phys.} {\bf 219} (2001), no. 2, 443--463.

\bibitem{BM08} B\'{a}lint, P., Melbourne, I.: Decay of correlations and invariance principles for dispersing billiards with cusps, and related planar billiard flows. {\it J. Stat. Phys.} {\bf 133} (2008), no. 3, 435--447.




\bibitem{CN} Carney M., Nicol M.:
Dynamical Borel-Cantelli lemmas and rates of growth of Birkhoff sums of non-integrable observables on chaotic dynamical systems. {\it Nonlinearity} {\bf 30} (2017), 2854--2870.

\bibitem{Chernov06}  Chernov N.: Advanced statistical properties of dispersing billiards. {\it J. Stat. Phys.} {\bf 122} (2006), no. 6, 1061--1094.

\bibitem{Ch66} Chover J.: A law of the iterated logarithm for stable summands.{\it Proc. AMS} {\bf 17} (1966), 441--443.




\bibitem{DGM} Dedecker J., Gou\"ezel S.,  Merlev\'ede F.:
Some almost sure results for unbounded functions of intermittent maps and their associated Markov chains.
Ann. Inst. Henri Poincare Probab. Stat. {\bf 46} (2010), 796--821.

\bibitem{DP84} Denker, M., Philipp, W.: Approximation by Brownian motion for Gibbs measures and flows under a function. {\it Ergodic Theory Dynam. Systems} {\bf 4} (1984), no. 4, 541--552.


\bibitem{Dolgopyat04}  Dolgopyat, D.: Limit theorems for partially hyperbolic systems. {\it Trans. Amer. Math. Soc.} {\bf 356} (2004), no. 4, 1637--1689.


\bibitem{DBS21} Dolgopyat, D., Fayad, B., Liu, S.:
 Multiple Borel-Cantelli lemma in dynamics and multiLog law for recurrence. {\it J. Mod. Dyn.} {\bf 18} (2022), 209--289.



\bibitem{FMT03}  Field, M., Melbourne, I., T\"{o}r\"{o}k, A.:  Decay of correlations, central limit theorems and approximation by Brownian motion for compact Lie group extensions. {\it Ergodic Theory Dynam. Systems} {\bf 23} (2003), 87--110.



\bibitem{HH80} Hall, P., Heyde, C. C.: {\it Martingale limit theory and its application,} Probability and Mathematical Statistics. Academic Press, New York-London, 1980.



\bibitem{HW41}  Hartman, P., Wintner, A.: On the law of the iterated logarithm. {\it Amer. J. Math.} {\bf 63} (1941), 169--176.


\bibitem{Haydn15} Haydn, N.: The almost sure invariance principle for beta-mixing measures. J. Stat. Phys. {\bf 159} (2015), no. 2, 231--254.











\bibitem{HM07}  Holland, M., Melbourne, I.: Central limit theorems and invariance principles for Lorenz attractors. {\it J. Lond. Math. Soc.(2)} {\bf 76} (2007), no. 2, 345--364.

\bibitem{Kern23} Kern, J.: The Skorokhod topologies: what they are and why We should care, arXiv: 2210.16026.


\bibitem{KS19a}
Kesseb\"ohmer M., Schindler T. I.:
 Strong laws of large numbers for intermediately trimmed sums of i.i.d. random variables with infinite mean.
 {\it J. Theoret. Probab.} {\bf 32} (2019), 702--720.


\bibitem{KS20}
Kesseb\"ohmer M., Schindler T. I.:
Intermediately trimmed strong laws for Birkhoff sums on subshifts of finite type.
{\it Dyn. Syst.}  {\bf 35} (2020), 275--305.

\bibitem{Kes97} Kesten H:
A universal form of the Chung-type law of the iterated logarithm.
{\it Ann. Probab.} {\bf 25} (1997), 1588--1620.

\bibitem{KM95}
Kesten H., Maller R. A.:
The effect of trimming on the strong law of large numbers.
{\it Proc. LMS} {\bf 71} (1995), 441--480.




\bibitem{Khintchine24} Khintchine A.: \"{U}ber tber einen Satz der Wahrscheinlichkeitsrechnung. {\it Fund. Math.} {\bf 6} (1924), 9--20

\bibitem{Kolmogoroff24} Kolmogoroff, A.: \"{U}ber das Gesetz des iterierten Logarithmus. {\it Math. Ann.} {\bf 101} (1929) 126--135.



\bibitem{Korepanov18} Korepanov, A.: Equidistribution for nonuniformly expanding dynamical systems, and application to the almost sure invariance principle. {\it Comm. Math. Phys.} {\bf 359} (2018), no. 3, 1123--1138.



\bibitem{MN05} Melbourne, I., Nicol, M.: Almost sure invariance principle for nonuniformly hyperbolic systems. {\it Comm. Math. Phys.} {\bf 260} (2005), no. 1, 131--146.


\bibitem{MN09} Melbourne, I., Nicol, M.: A vector-valued almost sure invariance principle for hyperbolic dynamical systems. {\it Ann. Probab.} {\bf 37} (2009), no. 2, 478--505.



\bibitem{Mo76} Mori T.: The strong law of large numbers when extreme terms are excluded from sums. {\it Z. Wahrsch. Verw. Gebiete} {\bf 36} (1976), 189--194.



\bibitem{PS75}  Philipp, W., Stout, W.:
Almost sure invariance principles for partial sums of weakly dependent random variables.
{\it  Mem. Amer. Math. Soc.} (1975), {\bf 161}, iv+140 pp.



\bibitem{R76}  Rootzen, H.: Fluctuations of sequences which converge in distribution. {\it Ann. Probab.} {\bf 4} (1976) 456--463.

\bibitem{Str64} Strassen, V.: An invariance principle for the law of the iterated logarithm.
{\it Z. Wahrsch. Verw. Gebiete} {\bf 3} (1964), 211--226.

\bibitem{Str67} Strassen, V.: Almost sure behavior of sums of independent random variables and martingales. 1967 Proc. 5-th Berkeley Sympos. Math. Stat., Prob.  Vol. {\bf II} (1967), 315--343.


\bibitem{Viana97}  Viana, M.: {\it Stochastic Dynamics of Deterministic Systems,}  Rio de Janeiro: IMPA
(1997).


\end{thebibliography}
\end{document}